\newtheorem{thm}{Theorem}[section]
\newtheorem{theorem}[thm]{Theorem}
\newtheorem{corollary}[thm]{Corollary}
\newtheorem{proposition}[thm]{Proposition}
\newtheorem{lemma}[thm]{Lemma}
\newtheorem*{theorem*}{Theorem}
\newtheorem*{corollary*}{Corollary}
\theoremstyle{definition}
\newtheorem*{defn*}{Definiton}
\newtheorem*{ack}{Acknowledgements}
\newtheorem{remark}[thm]{Remark}
\newcommand{\N}{\mathbb{N}} 
\newcommand{\Z}{\mathbb{Z}} 
\newcommand{\G}{\Gamma}
\DeclareMathOperator{\Ima}{Im}
\newcommand{\La}{\Lambda}
\newcommand{\la}{\lambda}
\newcommand{\Id}{\mathrm{Id}}
\newcommand{\Fix}{\operatorname{Fix}}
\newcommand{\supp}{\operatorname{supp}}
\newcommand{\sS}{\mathsf{S}}
\newcommand{\cL}{\mathcal{L}}
\newcommand{\cP}{\mathcal{P}}
\title{The AH conjecture for Cantor minimal dihedral systems}
\author{Eduardo Scarparo}
\address{Eduardo Scarparo\\ Center for Engineering\\ Federal University of Pelotas\\ Brazil}
\email{eduardo.scarparo@ufpel.edu.br}
\thanks{This project has received funding from the European Research Council (ERC) under the European Union's Horizon 2020 research and innovation programme (grant agreement No. 817597).}
\begin{document}
\begin{abstract}
The AH conjecture relates the low-dimensional homology groups of a groupoid with the abelianization of its topological full group. We show that transformation groupoids of minimal actions of the infinite dihedral group on the Cantor set satisfy this conjecture.  The proof uses Kakutani–Rokhlin partitions adapted to such systems.
\end{abstract}
\maketitle
\section{Introduction}
The AH conjecture of Matui in \cite{Mat16} proposes that, given an effective minimal étale groupoid $G$ with unit space homeomorphic to the Cantor set, there is an exact sequence

$$H_0(G)\otimes\Z_2\stackrel{j}\longrightarrow [[G]]_{\mathrm{ab}}\stackrel{I}\longrightarrow H_1(G)\longrightarrow 0,$$
where $H_*(G)$, for $*=0,1$, are homology groups of $G$, $[[G]]$ is the topological full group of $G$, $I\colon[[G
]]_{\mathrm{ab}}\to H_1(G)$ is the \emph{index map}  and $j\colon H_0(G)\otimes\Z_2\to[[G]]_{\mathrm{ab}}$ is a map introduced by Nekrashevych in \cite{Nek19}. This conjecture has been verified for several classes of examples, e.g., principal almost finite groupoids (\cite{Mat16}), transformation groupoids of odometers (\cite{Sca20}), graph groupoids (\cite{Mat15} and \cite{NO21}) and Katsura–Exel–Pardo groupoids (\cite{NyOr21}).

In \cite{Mat16},  Matui also made a conjecture (called the \emph{HK conjecture}) relating homology groups of a groupoid $G$ and the K-theory of the reduced $C^*$-algebra of $G$. In \cite{Sca20} a counterexample was given to this conjecture  involving a certain non-free minimal action of the infinite dihedral group $\Z\rtimes\Z_2$ on the Cantor set (further counterexamples were later provided in \cite{OrSc22}, \cite{OS22} and \cite{D22}).

Minimal actions of $\Z\rtimes\Z_2$ on the Cantor set (what we call \emph{Cantor minimal dihedral systems}) have been studied in several different contexts.  For example, in \cite{BEK93},  Bratteli, Evans and Kishimoto showed that, if the action is not free, then the associated crossed product is approximately finite-dimensional. In \cite{Tho10}, Thomsen showed that the converse of this statement holds. In \cite[Example 3]{BRY18}, Baake,  Roberts and Yassawi gave an example of an aperiodic minimal subshift whose associated $\Z$-action cannot be extended to an action of $\Z\rtimes\Z_2$. In \cite{Jia21}, Jiang showed that continuously orbit-equivalent minimal actions of $\Z\rtimes\Z_2$ are conjugate.

In \cite{OrSc22}, Ortega and the author showed that (transformation groupoids of) Cantor minimal dihedral systems satisfy the HK conjecture if and only if the action is free. Furthermore, it was shown that such systems are always almost finite. By Matui's result (in \cite{Mat16}), it follows that, in the case that the action is free, the AH conjecture holds. But the question of whether non-free actions also satisfy it was left open.

In this work, we address this problem. The main tool is a technique of Grigorchuk and Medynets (\cite{GM14}) which, given a minimal homeomorphism on the Cantor set,  provides a decomposition of the elements of the topological full group as products of elements adapted to Kakutani–Rokhlin partitions.  

We also pose two questions about the range of homological invariants of Cantor minimal dihedral systems that we hope will be of interest to the dynamical systems and operator algebras communities (see Remark \ref{rem:range}).

\begin{remark}
In a recent preprint (\cite[Corollary E]{Li22}), Li proved that minimal, ample groupoids with comparison satisfy a stronger version of the AH conjecture.  By \cite[Lemma 6.7]{Mat12} and \cite[Theorem 2.10]{OrSc22}, transformation groupoids of Cantor minimal dihedral systems have comparison, thus are covered by Li's result. Li's methods are much deeper, however,  and involve tools from algebraic K-theory.
\end{remark}

\begin{ack}
I thank the referee for their many corrections and useful comments which helped to greatly improve the presentation of this work.
\end{ack}

\section{Preliminaries}
\subsection{Homology of transformation groupoids}

In this section, we provide an ad hoc definition of the low-dimensional homology groups of a transformation groupoid. See, for example, \cite[Section 3.1]{Mat12} for a more general perspective.

Let $X$ and $Y$ be totally disconnected locally compact Hausdorff spaces. Given a local homeomorphism $\pi\colon X\to Y$,  there is a map $\pi_*\colon C_c(X,\Z)\to C_c(Y,\Z)$ given by $$\pi_*(f)(y):=\sum_{x\in\pi^{-1}(y)}f(x),$$ for $f\in C_c(X,\Z)$ and $y\in Y$. Notice that, given $U\subset X$ compact-open such that $\pi|_U$ is a homeomorphism,  we have that $\pi_*(1_U)=1_{\pi(U)}$.

Let $\G$ be a discrete group acting by homeomorphisms on $X$.  As a space, the \emph{transformation groupoid} of the action,  denoted by $\G\ltimes X$, is the set $\G\times X$ endowed with the product topology.  The \emph{range} and \emph{source} maps $r,s\colon\G\ltimes X\to X$ are given by $r(g,x):=gx$ and $s(g,x):=x$, for $(g,x)\in\G\ltimes X$.  The product of two elements $(h,y)$ and $(g,x)$ is defined if and only if $y=gx$, in which case $(h,gx)(g,x):=(hg,x)$. Inversion is given by $(g,x)^{-1}:=(g^{-1},gx)$.  Let $(\G\ltimes X)^{(2)}:=\{(a_1,a_2)\in(\G\ltimes X)^2:s(a_1)=r(a_2)\}.$

A \emph{bisection} is a compact open subset $U\subset\G\ltimes X$ such that $r|_U$ and $s|_U$ are homeomorphisms.  In this case, there exist $g_1,\dots,g_n\in\G$ and clopen sets $A_1,\dots,A_n\subset X$ such that $s(U)=\bigsqcup A_i$, $r(U)=\bigsqcup g_i A_i$ and $U=\bigsqcup\{g_i\}\times A_i$. 

Given bisections $U$ and $V$, the set 

\begin{equation*}
UV:=\{a_1a_2:(a_1,a_2)\in (U\times V)\cap(\G\ltimes X)^{(2)}\}
\end{equation*}
is a bisection as well, and so is $U^{-1}=\{a^{-1}:a\in U\}$.

Let $d_0,d_1,d_2\colon (\G\ltimes X)^{(2)}\to\G\ltimes X$ be the local homeomorphisms given by

\begin{align*}
d_i(a_1,a_2):=
\begin{cases}
a_2 & \text{if $i=0$}\\
a_1a_2 & \text{if $i=1$}\\
a_1 & \text{if $i=2$}. 
\end{cases}
\end{align*} 

Define $\delta_2\colon C_c((\G \ltimes X)^{(2)},\Z)\to C_c(\G\ltimes X,\Z)$ by $\delta_2:=d_{0_*}-d_{1_*}+d_{2_*}$ and $\delta_1\colon C_c(\G\ltimes X,\Z)\to C_c(X,\Z)$ by $\delta_1:=s_*-r_*$.  Notice that  $\delta_1(\delta_2(1_{U\times V}))=0$ for all bisections $U,V$ such that $s(U)=r(V)$.  Since $C_c((\G \ltimes X)^{(2)},\Z)$ is spanned by such $1_{U\times V}$, we conclude that $\delta_1\circ\delta_2=0$.  We let $H_0(\G\ltimes X):=\frac{C_c(X,\Z)}{\Ima\delta_1}$ and $H_1(\G\ltimes X):=\frac{\ker\delta_1}{\Ima\delta_2}$.

\subsection{Cantor minimal dihedral systems}

An action of a group $\G$ on a compact Hausdorff space $X$ is said to be \emph{topologically free} if, for any $g\in\G\setminus\{e\}$, the set $\Fix_g:=\{x\in X:gx=x\}$ has empty interior. It is said to be \emph{minimal} if every point has dense orbit.  If $\varphi$ is a homeomorphism on $X$, then we say that $\varphi$ is minimal if the associated $\Z$-action is minimal.

Recall that the \emph{infinite dihedral group} is the group $\Z\rtimes\Z_2$, where the $\Z_2$-action is given by multiplication by $-1$.  Any action $\alpha$ of $\Z\rtimes\Z_2$ on a space $X$ is given by a pair $(\varphi,\sigma)$ of homeomorphisms on $X$ such that $\sigma\varphi\sigma^{-1}=\varphi^{-1}$ and $\sigma^2=\Id_X$.  In this case,  given $(n,i)\in\Z\rtimes\Z_2$, we have that $\alpha_{n,i}=\varphi^n\sigma^i$.  Alternatively, since $\Z\rtimes\Z_2\simeq\Z_2*\Z_2$, any action of the infinite dihedral group on $X$ is given by a pair of homeomorphisms $a$ and $b$ on $X$ such that $a^2=b^2=\Id_X$. Moreover, we can identify $\sigma$ with $a$, and $\varphi\sigma$ with $b$.

\begin{remark}\label{remark}
In \cite[Proposition 2.8]{OrSc22}, it was observed that any minimal action $(\varphi,\sigma)$ of $\Z\rtimes\Z_2$ on the Cantor set is topologically free. Furthermore, if the action is not free, then $\varphi$ is minimal, and $\Fix_\sigma$ and $\Fix_{\varphi\sigma}$ are disjoint sets (because otherwise $\varphi$ would admit a fixed point),  at least one of which is nonempty. 
\end{remark}

Notice that $\sigma$ induces a homomorphism 
\begin{align*}
\sigma_*\colon H_0(\Z\ltimes X)&\to H_0(\Z\ltimes X)\\
[f]&\mapsto[f\circ\sigma],
\end{align*}
for $f\in C_c(X,\Z)$.

We recall the following computation of homology groups of Cantor minimal dihedral systems. In order to ease the notation, we denote $\Id_{H_0(\Z\ltimes X)}$ by $1$.

\begin{theorem}[{\cite[Theorem 3.6]{OrSc22}}]\label{thm:hom}
Let $\alpha:=(\varphi,\sigma)$ be a minimal action of $\Z\rtimes\Z_2$ on the Cantor set $X$.
\begin{enumerate}
\item[(i)] If $\varphi$ is not minimal, then there exists a clopen $\varphi$-invariant set $Y\subset X$ such that $X=Y\sqcup\sigma(Y)$ and $\varphi|_Y$ is minimal. Furthermore,
\begin{align*}
H_0((\Z\rtimes\Z_2)\ltimes X)&\simeq H_0(\Z\ltimes Y),\\
H_1((\Z\rtimes\Z_2)\ltimes X)&\simeq\Z.
\end{align*}
\item[(ii)] If $\varphi$ is minimal and $\alpha$ is free, then
\begin{align*}
H_0((\Z\rtimes\Z_2)\ltimes X)&\simeq \Z_2\oplus (1+\sigma_*)H_0(\Z\ltimes X),\\
H_1((\Z\rtimes\Z_2)\ltimes X)&=0.
\end{align*}
\item[(iii)] If $\alpha$ is not free, then
\begin{align}
H_0((\Z\rtimes\Z_2)\ltimes X)&\simeq (1+\sigma_*)H_0(\Z\ltimes X),\nonumber\\
H_1((\Z\rtimes\Z_2)\ltimes X)&\simeq C(\mathrm{Fix}_\sigma\sqcup \mathrm{Fix}_{\varphi\sigma},\Z_2).\label{nonfree}
\end{align}

\end{enumerate}
\end{theorem}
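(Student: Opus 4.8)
The plan is to first identify the groupoid homology with ordinary group homology. For a discrete group acting on the totally disconnected space $X$, the complex $C_c((\G\ltimes X)^{(2)},\Z)\xrightarrow{\delta_2}C_c(\G\ltimes X,\Z)\xrightarrow{\delta_1}C_c(X,\Z)$ defined above is canonically the low-degree part of the bar complex computing $H_*(\G;M)$, where, writing $\G=\Z\rtimes\Z_2$ and $M:=C_c(X,\Z)$, the module $M$ carries the action $(g\cdot f)(x)=f(g^{-1}x)$; one checks directly that $\delta_1$ and $\delta_2$ agree with the bar differentials. So I would work throughout with $H_*(\G;M)$. Case (i), where $\varphi$ is not minimal, I would dispose of first: the $\varphi$-minimal components are permuted by $\sigma$, and minimality of $\alpha$ forces exactly two of them, swapped by $\sigma$, giving a clopen $\varphi$-invariant $Y$ with $X=Y\sqcup\sigma Y$ and $\varphi|_Y$ minimal. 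Since only the powers of $\varphi$ preserve $Y$, the reduction of $\G\ltimes X$ to the full clopen set $Y$ is the transformation groupoid $\Z\ltimes Y$; as homology is invariant under this (Morita) reduction, $H_*((\Z\rtimes\Z_2)\ltimes X)\cong H_*(\Z\ltimes Y)$, whence $H_0\cong H_0(\Z\ltimes Y)$ and $H_1\cong C(Y,\Z)^{\varphi}\cong\Z$ by minimality of $\varphi|_Y$.

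For the remaining cases $\varphi$ is minimal, and I would exploit the decomposition $\Z\rtimes\Z_2=\langle\sigma\rangle*\langle\varphi\sigma\rangle$ into a free product of two copies of $\Z_2$. The associated Mayer–Vietoris sequence in group homology reads
$$0\to H_1(\langle\sigma\rangle;M)\oplus H_1(\langle\varphi\sigma\rangle;M)\to H_1(\G;M)\to M\xrightarrow{\psi}\frac{M}{(\sigma-1)M}\oplus\frac{M}{(\varphi\sigma-1)M}\to H_0(\G;M)\to0,$$
where $\psi$ is the difference of the two quotient maps. The key local computation is that, for an involution $\tau$ (here $\sigma$ or $\varphi\sigma$), one has $H_1(\langle\tau\rangle;M)\cong C(\Fix_\tau,\Z_2)$. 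This follows from the $\Z_2$-equivariant exact sequence $0\to C_c(X\setminus\Fix_\tau,\Z)\to M\to C(\Fix_\tau,\Z)\to0$: on the complement $\tau$ acts freely, so that submodule is induced, hence cohomologically trivial, while on $\Fix_\tau$ the action is trivial, giving $H_1(\langle\tau\rangle;M)\cong\hat{H}^0(\langle\tau\rangle;C(\Fix_\tau,\Z))=C(\Fix_\tau,\Z)/2C(\Fix_\tau,\Z)\cong C(\Fix_\tau,\Z_2)$.

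To finish $H_1$ I would show the connecting kernel vanishes: $\ker\psi=(\sigma-1)M\cap(\varphi\sigma-1)M$ is contained in $\{f:f\circ\sigma=-f\}\cap\{f:f\circ\varphi\sigma=-f\}$, and any such $f$ satisfies $f\circ\varphi=f$, so is constant by minimality of $\varphi$ and therefore $0$. Hence $H_1(\G;M)\cong C(\Fix_\sigma,\Z_2)\oplus C(\Fix_{\varphi\sigma},\Z_2)\cong C(\Fix_\sigma\sqcup\Fix_{\varphi\sigma},\Z_2)$, which is $0$ in the free case (ii) and the stated group in (iii). For $H_0$ the sequence gives $H_0(\G;M)=M/((\sigma-1)M+(\varphi\sigma-1)M)$, and since $(\sigma-1)M+(\varphi\sigma-1)M=(\varphi-1)M+(\sigma-1)M$, this equals $D/(1-\sigma_*)D$ for $D:=H_0(\Z\ltimes X)$. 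The norm map $1+\sigma_*$ then induces a surjection $D/(1-\sigma_*)D\twoheadrightarrow(1+\sigma_*)D$ whose kernel is $\hat{H}^{-1}(\Z_2;D)$ for the involution $\sigma_*$.

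The main obstacle is precisely this last kernel: I expect the hardest part to be showing $\hat{H}^{-1}(\Z_2;D)=0$ in the non-free case (iii) and $\hat{H}^{-1}(\Z_2;D)\cong\Z_2$, with a split extension, in the free case (ii)—this is exactly what produces the extra $\Z_2$ summand when $\alpha$ is free and suppresses it otherwise. I would compute it by feeding the $\Z_2$-equivariant sequence $0\to(\varphi-1)M\to M\to D\to0$ into the Tate long exact sequence and tracking the connecting maps against the fixed-point data of the reflections, using that $\alpha$ is free if and only if $\Fix_\sigma=\Fix_{\varphi\sigma}=\emptyset$. Reconciling these fixed-point contributions across the relevant short exact sequences, and verifying that the extension splits in case (ii), is the delicate bookkeeping at the heart of the argument.
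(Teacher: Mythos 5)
Your overall framework is sound and is essentially the route of the source the paper cites for this result (the paper itself contains no proof of Theorem~\ref{thm:hom}; it quotes \cite{OrSc22}): identifying $H_*(\G\ltimes X)$ in degrees $0,1$ with group homology $H_*(\G;M)$ for $M=C(X,\Z)$, disposing of case (i) by restricting to the full clopen set $Y$ (where indeed the reduction is $\Z\ltimes Y$, since $\varphi^n\sigma(Y)\subset\sigma(Y)$), and running Mayer--Vietoris for $\Z_2*\Z_2$ in cases (ii)--(iii). Your $H_1$ computation is correct in substance: $H_1(\langle\tau\rangle;M)\cong C(\Fix_\tau,\Z_2)$ holds, though two small points deserve justification --- the free part $C_c(X\setminus\Fix_\tau,\Z)$ being induced requires a clopen fundamental domain on a \emph{locally compact} totally disconnected space (a mild extension of Lemma~\ref{lem:tho}), and surjectivity onto $C(\Fix_\tau,\Z_2)$ needs the connecting map into $H_0(\Z_2;C_c(X\setminus\Fix_\tau,\Z))$ to vanish, which follows since a torsion group admits no nonzero map to a torsion-free one. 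Your proof that $\ker\psi=0$ via anti-invariance of $(\sigma-1)M$ and minimality of $\varphi$ is exactly right, and it settles $H_1$ in both cases (ii) and (iii).

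The genuine gap is the $H_0$ statement in cases (ii) and (iii). You correctly reduce it to the extension $0\to\hat H^{-1}(\Z_2;D)\to D/(1-\sigma_*)D\to(1+\sigma_*)D\to0$ with $D:=H_0(\Z\ltimes X)$, but the claims $\hat H^{-1}(\Z_2;D)=0$ in the non-free case and $\hat H^{-1}(\Z_2;D)\cong\Z_2$ in the free case are precisely the content of the theorem, and your final paragraph asserts them with a plan rather than a proof. They do not follow formally from feeding $0\to(\varphi-1)M\to M\to D\to 0$ into the Tate sequence: the induced involution on $(\varphi-1)M$ is twisted (one has $\sigma\cdot(\varphi-1)m=-\varphi^{-1}(\varphi-1)\sigma m$), and evaluating the resulting Tate groups and connecting maps requires genuine dynamical input --- in \cite{OrSc22} this is where $\sigma$-compatible Kakutani--Rokhlin partitions in the spirit of \cite{BEK93} (cf.\ Proposition~\ref{partition}) do the work. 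A sanity check that the step is substantive: in the free case the theorem forces $\ker(1+\sigma_*)/(1-\sigma_*)D\cong\Z_2$, so in particular $\sigma_*\neq 1$, whereas $\sigma_*=1$ does occur in non-free examples (Remark~\ref{rem:range}(ii)); nothing in your formal setup distinguishes these situations, so freeness must enter the Tate computation in an essential way that your sketch never supplies. (By contrast, the splitting you worry about in case (ii) is automatic once the kernel is identified: $(1+\sigma_*)D$ is torsion-free because $D$ is, and $\Z_2$ is bounded, hence cotorsion, so the extension splits.)
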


\begin{remark}\label{rem:range}
(i) We do not know any example of a minimal action $(\varphi,\sigma)$ of $\Z\rtimes\Z_2$ on the Cantor set for which $\Fix_\sigma$ and $\Fix_{\varphi\sigma}$ are not both finite.  For odometers and actions coming from irrational rotations this was shown to be the case in \cite[Lemma 3.2]{Sca20} and \cite[Corollary 4.4]{BEK93}, respectively.  Let $(X,\varphi)$ be a two-sided shift space coming from a primitive substitution. Suppose that, given any word $w_1\dotsb w_n$ in the language $\mathcal{L}(X)$, it holds that $w_n\dotsb w_1\in \cL(X)$ as well.  Then the map $\sigma\colon X\to X$ defined by $\sigma(x)_n:=x_{-n}$ is well-defined and induces an action $(\varphi,\sigma)$ of $\Z\rtimes\Z_2$ on $X$. It follows from \cite[Theorem 1.1]{DZ00} that $\Fix_\sigma$ and $\Fix_{\varphi\sigma}$ are both finite.

(ii) Let $(\varphi,\sigma)$ be an action of $\Z\rtimes\Z_2$ on the Cantor set $X$ such that $\varphi$ is minimal.  For odometers and actions coming from irrational irrations, it was shown in \cite[Proposition 3.3]{Sca20} and \cite[Corollary 4.4]{BEK93}, respectively, that $\sigma_*=\Id_{H_0(\Z\ltimes X)}$. In such cases,  $1+\sigma_*$ becomes simply multiplication by $2$. Since $H_0(\Z\ltimes X)$ is always torsion-free,  it follows that $(1+\sigma_*)H_0(\Z\ltimes X)\simeq H_0(\Z\ltimes X)$. It would be interesting to find examples in which $\sigma_*\neq\Id_{H_0(\Z\ltimes X)}$.

\end{remark}

We will now present an explicit description of the isomorphism in \eqref{nonfree} which will be necessary later.

Let $\G$ be a group acting on a compact Hausdorff space $X$.  Given $f\in C_c(\G\ltimes X,\Z)$ and $w\in \G$,  we let $f_w\in C(X,\Z)$ be given by $f_w(x):=f(w,x)$,  for $x\in X$.  If $g\in \G$ and $x\in X$, then $gf_w(x):=f(w,g^{-1}x)$. 

Given $w\in\Z_2*\Z_2$, let $w_1\dotsb w_n$ be the reduced expression of $w$ in the letters $a,b$ associated to the generators of $\Z_2*\Z_2$.  Denote the \emph{length} of $w$ by $|w|$.  Given $1\leq i\leq j\leq n$, let $w_{(i,j]}:=w_{i+1}\dotsb w_j$ if $i<j$, and $e$ otherwise. 
\begin{lemma}\label{wd}
Let $(a,b)$ be a non-free minimal action of $\Z_2*\Z_2$ on the Cantor set $X$. There is a map $R\colon C_c((\Z_2*\Z_2)\ltimes X,\Z)\to C(\Fix_a\sqcup\Fix_b,\Z_2)$ given by
$$R(f):=\sum_{w\in\Z_2*\Z_2}\sum_{i=1}^{|w|}(w_{(i,|w|]}f_w)1_{\Fix_{w_i}}\mod 2$$
that vanishes on $\Ima\delta_2$.
\end{lemma}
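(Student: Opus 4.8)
The plan is to exploit that $R$ is additive (it is the reduction mod $2$ of a $\Z$-linear map) and that each $f\in C_c((\Z_2*\Z_2)\ltimes X,\Z)$ has $f_w=0$ for all but finitely many $w$, so that $R(f)$ is a finite sum of functions that are continuous on $\Fix_a\sqcup\Fix_b$; well-definedness is then immediate and the whole content is the vanishing on $\Ima\delta_2$. Since the domain of $\delta_2$ is spanned by the indicators $1_{U\times V}$ with $U=\{g\}\times A$ and $V=\{h\}\times B$ elementary bisections (a general bisection being a finite disjoint union of such), it suffices to treat these. First I would compute the boundary explicitly: writing $C:=B\cap h^{-1}A$ and $D:=hC$, the composable pairs are the $((g,hy),(h,y))$ with $y\in C$, and pushing forward along $d_0,d_1,d_2$ yields
\begin{equation*}
\delta_2\big(1_{U\times V}\big)=1_{\{h\}\times C}-1_{\{gh\}\times C}+1_{\{g\}\times D}.
\end{equation*}
Introducing the auxiliary additive quantity $S(w,\phi):=\sum_{i=1}^{|w|}(w_{(i,|w|]}\phi)1_{\Fix_{w_i}}$ for $w\in\Z_2*\Z_2$ and $\phi\in C(X,\Z)$, one has $R(1_{\{w\}\times E})\equiv S(w,1_E)\bmod 2$, and since reduction mod $2$ erases the sign, the problem becomes the identity
\begin{equation*}
S(h,\phi)+S(gh,\phi)+S(g,h\phi)\equiv 0 \pmod 2,\qquad \phi:=1_C=h^{-1}1_D,
\end{equation*}
where I used that $h\phi=1_D$.

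The key step is a mod-$2$ cocycle identity $S(g_1g_2,\phi)\equiv S(g_1,g_2\phi)+S(g_2,\phi)\pmod 2$, valid for all $g_1,g_2$ and all $\phi$. Granting it, the target identity follows at once by taking $g_1=g$, $g_2=h$, since the two right-hand sides then combine into $2\big(S(g,h\phi)+S(h,\phi)\big)\equiv 0$. To prove the cocycle identity I would first record two peeling recursions, obtained by isolating the extreme letter of the word:
\begin{equation*}
S(xv,\phi)=(v\phi)\,1_{\Fix_x}+S(v,\phi),\qquad S(ux,\phi)=\phi\,1_{\Fix_x}+S(u,x\phi),
\end{equation*}
for $x\in\{a,b\}$ (the first isolates the first letter, the second the last). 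When $g_1g_2$ is already reduced, splitting the defining sum of $S(g_1g_2,\phi)$ at position $|g_1|$ reads off $S(g_1,g_2\phi)$ from the initial block and $S(g_2,\phi)$ from the final block, so in that case the identity holds exactly, not merely mod $2$.

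The remaining, and main, difficulty is the cancellation case, which I would handle by induction on $|g_1|+|g_2|$. If the junction cancels, write $g_1=g_1'x$ and $g_2=xg_2'$, so that $g_1g_2=g_1'g_2'$ with strictly smaller total length. The two peeling recursions give $S(g_1,g_2\phi)=(g_2\phi)1_{\Fix_x}+S(g_1',g_2'\phi)$ (using $x(g_2\phi)=g_2'\phi$) and $S(g_2,\phi)=(g_2'\phi)1_{\Fix_x}+S(g_2',\phi)$, while the induction hypothesis gives $S(g_1g_2,\phi)=S(g_1'g_2',\phi)\equiv S(g_1',g_2'\phi)+S(g_2',\phi)$. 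Comparing, everything matches except the two boundary terms $(g_2\phi)1_{\Fix_x}$ and $(g_2'\phi)1_{\Fix_x}$, and here is the crucial point: on $\Fix_x$ one has $xp=p$, and since $g_2=xg_2'$ this forces $(g_2\phi)(p)=(g_2'\phi)(x^{-1}p)=(g_2'\phi)(p)$; the two boundary terms therefore coincide on $\Fix_x$ (and vanish off it), so their sum is zero mod $2$. This closes the induction.

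I expect the cancellation case to be the only genuine obstacle: the bookkeeping of aligning the peeling recursions with the correctly conjugated functions, and pinning down the fixed-point cancellation $(g_2\phi)|_{\Fix_x}=(g_2'\phi)|_{\Fix_x}$, is exactly where the structure of the problem is used. Everything else — additivity of $R$, the explicit three-term boundary, and the no-cancellation split — is routine, and the final passage from the cocycle identity to $R\circ\delta_2=0$ is formal.
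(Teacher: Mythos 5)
Your proposal is correct and takes essentially the same approach as the paper: both reduce to the elementary three-term boundaries $1_U-1_{UV}+1_V$, and both hinge on exactly the same fixed-point cancellation (your $(g_2\phi)|_{\Fix_x}=(g_2'\phi)|_{\Fix_x}$ is the functional form of the paper's $gB\cap\Fix_g=B\cap\Fix_g$). The only difference is organizational: the paper writes $u=u'x^{-1}$, $v=xv'$ and cancels the entire common segment $x$ in one reindexed sum ($j=|x|-i+1$), whereas you package the same computation as a mod-$2$ cocycle identity for $S$ and peel off one cancelling letter at a time by induction.
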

\begin{proof}
We will first show that the image of $R$ consists of continuous functions. Given $A\subset X$ clopen and $w\in\Z_2*\Z_2$, notice that 
$$R(1_{\{w\}\times A})=\sum_{i=1}^{|w|}1_{(w_{(i,|w|]}A)\cap\Fix_{w_i}}$$
(the two sides of this equation are viewed as functions taking values in $\Z_2$). Since $C_c((\Z_2*\Z_2)\ltimes X,\Z)$ is generated by elements of the form $1_{\{w\}\times A}$, it follows that $R$ takes values in $C(\Fix_a\sqcup\Fix_b,\Z_2)$. 

We will now show that $R$ vanishes on $\Ima\delta_2$. Given $u,v\in \Z_2*\Z_2$, there exist $u',x,v'\in\Z_ 2*\Z_2$ such that $u=u'x^{-1}$ and $v=xv'$ are in reduced form as is $u'v'$. Fix $A\subset X$ clopen and let $U:=\{u\}\times vA$ and $V:=\{v\}\times A$.  Then $UV=\{u'v'\}\times A$ and

\begin{equation}\label{image}
\delta_2(1_{(U\times V)\cap((\Z_2*\Z_2)\ltimes X)^{(2)}})=1_U-1_{UV}+1_V.
\end{equation}

Furthermore,  
\begin{align*}
R(1_U-1_{UV}+1_V)
=\sum_{i=1}^{|u|}1_{(u_{(i,|u|]}vA)\cap\Fix_{u_i}}-\sum_{i=1}^{|u'v'|}1_{((u'v')_{(i,|u'v'|]}A)\cap\Fix_{(u'v')_i}}\\
+\sum_{i=1}^{|v|}1_{(v_{(i,|v|]}A)\cap\Fix_{v_i}}
=\sum_{i=1}^{|u'|}1_{(u'_{(i,|u'|]}x^{-1}vA)\cap\Fix_{u'_i}}
+\sum_{i=1}^{|x|}1_{((x^{-1})_{(i,|x|]}vA)\cap\Fix_{(x^{-1})_i}}\\
-\sum_{i=1}^{|u'|}1_{(u'_{(i,|u'|]}v'A)\cap\Fix_{u'_i}}
-\sum_{i=1}^{|v'|}1_{(v'_{(i,|v'|]}A)\cap\Fix_{v'_i}}
+\sum_{i=1}^{|x|}1_{(x_{(i,|x|]}v'A)\cap\Fix_{x_i}}\\
+\sum_{i=1}^{|v'|}1_{(v'_{(i,|v'|]}A)\cap\Fix_{v'_i}}
=\sum_{i=1}^{|x|}1_{((x^{-1})_{(i,|x|]}xv'A)\cap\Fix_{(x^{-1})_i}}
+\sum_{i=1}^{|x|}1_{(x_{(i,|x|]}v'A)\cap\Fix_{x_i}}\\
\stackrel{j:=|x|-i+1}=\sum_{j=1}^{|x|}1_{(x_1\dots x_{j-1})^{-1}xv'A)\cap\Fix_{x_j}}+\sum_{i=1}^{|x|}1_{(x_{(i,|x|]}v'A)\cap\Fix_{x_i}}\\
=\sum_{j=1}^{|x|}1_{(x_jx_{(j,|x|]}v'A)\cap\Fix_{x_j}}+\sum_{i=1}^{|x|}1_{(x_{(i,|x|]}v'A)\cap\Fix_{x_i}}=0,
\end{align*}
where the last equality follows from the fact that, given $B\subset X$ and $g\in\Z_2*\Z_2$, we have $gB\cap\Fix_g=B\cap\Fix_g$.

Since $\Ima\delta_2$ is generated by elements of the form given in \eqref{image},  the result follows.

\end{proof}
The proof of the following result is exactly the same as \cite[Lemma 4.38]{Tho10}, so we omit it.
\begin{lemma}\label{lem:tho}
 Let $U$ be a totally disconnected compact Hausdorff space with $|U|>1$ and $\sigma$ a homeomorphism on $U$ such that $\sigma^2=\Id_U$. If $\Fix_\sigma=\emptyset$, then there exists a clopen set $V\subset U$ such that $U=V\sqcup \sigma(V)$.
\end{lemma}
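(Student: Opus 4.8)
The plan is to produce $V$ as a clopen transversal for the free involution $\sigma$, built from a finite clopen cover by a careful disjointification. The key preliminary observation is that $\Fix_\sigma=\emptyset$ makes $\sigma$ a free involution, so every orbit $\{x,\sigma(x)\}$ has exactly two points; in particular $|U|>1$ is automatic. First I would separate a single orbit: fixing $x\in U$, since $U$ is Hausdorff and totally disconnected and $x\neq\sigma(x)$, there is a clopen $W\ni x$ with $\sigma(x)\notin W$, equivalently $x\notin\sigma(W)$. Passing to $W_x:=W\setminus\sigma(W)$ (still clopen, and still containing $x$) and using $\sigma^2=\Id_U$ to compute $\sigma(W_x)=\sigma(W)\setminus W$, one obtains a clopen neighbourhood $W_x$ of $x$ with $W_x\cap\sigma(W_x)=\emptyset$.

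These $W_x$ cover $U$, so by compactness I would fix a finite subcover $W_1,\dots,W_n$, each satisfying $W_i\cap\sigma(W_i)=\emptyset$, and then disjointify. Setting
$$C_k:=W_k\setminus\bigcup_{j<k}\bigl(W_j\cup\sigma(W_j)\bigr),\qquad V:=\bigcup_{k=1}^{n}C_k,$$
which is clopen, I claim $U=V\sqcup\sigma(V)$. For surjectivity, given $x\in U$ let $k$ be least with $x\in W_k$ or $\sigma(x)\in W_k$; minimality forces $x$ (resp.\ $\sigma(x)$) to avoid $W_j\cup\sigma(W_j)$ for every $j<k$, so $x\in C_k\subseteq V$ (resp.\ $\sigma(x)\in C_k$, i.e.\ $x\in\sigma(V)$). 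For disjointness, suppose $x\in C_k$ and $\sigma(x)\in C_l$: the case $k=l$ would give $x\in W_k\cap\sigma(W_k)=\emptyset$, while $k\neq l$ contradicts the exclusion defining $C_{\max(k,l)}$, throughout using the identity $\sigma(x)\in W_l\iff x\in\sigma(W_l)$.

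I expect the only delicate point to be the bookkeeping in this last step: one must verify that the single set $V$ simultaneously satisfies $V\cup\sigma(V)=U$ and $V\cap\sigma(V)=\emptyset$, and it is precisely the ``least index'' choice together with the symmetric exclusion of both $W_j$ and $\sigma(W_j)$ that makes these two requirements compatible. Everything else is routine manipulation of clopen sets, and the argument is the standard splitting of a free continuous $\Z_2$-action on a Stone space, matching \cite[Lemma 4.38]{Tho10}.
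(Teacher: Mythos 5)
Your proof is correct and is essentially the standard argument that the paper itself omits by deferring to \cite[Lemma 4.38]{Tho10}: separate each two-point orbit by a clopen set disjoint from its $\sigma$-image, pass to a finite subcover by compactness, and disjointify symmetrically by removing both $W_j$ and $\sigma(W_j)$ at each stage. All the bookkeeping in your surjectivity and disjointness checks goes through, so nothing further is needed.
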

\begin{theorem}\label{thm:form}
Let $(a,b)$ be a non-free minimal action of $\Z_2*\Z_2$ on the Cantor set $X$. The map $R\colon H_1((\Z_2*\Z_2)\ltimes X)\to C(\Fix_a\sqcup\Fix_b,\Z_2)$ given by
$$R([f]):=\sum_{w\in\Z_2*\Z_2}\sum_{i=1}^{|w|}(w_{(i,|w|]}f_w)1_{\Fix_{w_i}}\mod 2,$$
for $f\in\ker\delta_1$, is an isomorphism.

\end{theorem}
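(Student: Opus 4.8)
By Lemma~\ref{wd}, $R$ vanishes on $\Ima\delta_2$ and so descends to a homomorphism on $H_1=\ker\delta_1/\Ima\delta_2$; it remains to prove that this homomorphism is surjective and injective, the latter being the crux. For surjectivity I would first record the elementary computation that, for a clopen $a$-invariant set $A$, one has $1_{\{a\}\times A}\in\ker\delta_1$ and $R(1_{\{a\}\times A})=1_{A\cap\Fix_a}$, and symmetrically for $b$. Given any clopen $K\subseteq\Fix_a$, choosing a clopen $U\subseteq X$ with $U\cap\Fix_a=K$ and setting $A:=U\cup aU$ produces an $a$-invariant clopen set with $A\cap\Fix_a=K$, so $1_K$ lies in the image of $R$. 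Since $C(\Fix_a\sqcup\Fix_b,\Z_2)$ is generated by such indicator functions, $R$ is surjective.

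The injectivity rests on understanding the quotient $Q:=C_c((\Z_2*\Z_2)\ltimes X,\Z)/\Ima\delta_2$. Using the defining relations of $\Ima\delta_2$ I would first peel words into letters, obtaining $[1_{\{w\}\times A}]=\sum_{i=1}^{|w|}[1_{\{w_i\}\times w_{(i,|w|]}A}]$ and $[1_{\{e\}\times A}]=0$ in $Q$. Consequently the maps $\Phi_a,\Phi_b\colon C(X,\Z)\to Q$ determined by $\Phi_a(1_A):=[1_{\{a\}\times A}]$, etc., are jointly surjective and satisfy $\Phi_a((1+a)g)=0$ (this last from $[1_{\{a\}\times aA}]=-[1_{\{a\}\times A}]$). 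To see that $\Phi_a\oplus\Phi_b$ induces an isomorphism $C(X,\Z)/(1+a)C(X,\Z)\oplus C(X,\Z)/(1+b)C(X,\Z)\simeq Q$, I would construct left inverses $\Psi_a,\Psi_b$ by mimicking the formula for $R$ but recording every occurrence of a letter rather than only those fixing a point: set $\Psi_a(f):=\sum_w\sum_{i\,:\,w_i=a}[w_{(i,|w|]}f_w]\in C(X,\Z)/(1+a)C(X,\Z)$. That $\Psi_a$ vanishes on $\Ima\delta_2$ follows from the same bookkeeping as in Lemma~\ref{wd}, with the identity $gB\cap\Fix_g=B\cap\Fix_g$ used there replaced by $[1_{aB}]=-[1_B]$ in $C(X,\Z)/(1+a)C(X,\Z)$; one checks $\Psi_a\Phi_a=\Id$ and $\Psi_a\Phi_b=0$. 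Under this identification $R$ becomes $\bar R_a\oplus\bar R_b$, where $\bar R_a([g]):=g|_{\Fix_a}\bmod 2$.

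The decisive point is that $\ker R\subseteq Q$ is torsion-free, which via the isomorphism above reduces to the claim that $\ker\bar R_a$ is torsion-free, and here I would use Lemma~\ref{lem:tho}. If $[g]$ is a torsion element then $ng\in(1+a)C(X,\Z)$ for some $n\geq 1$, and applying $a$ gives $nag=ng$, whence $ag=g$; thus $g$ is $a$-invariant and, since $[g]\in\ker\bar R_a$, even on $\Fix_a$. Partitioning $X$ into the ($a$-invariant) clopen level sets of $g$: on each level set disjoint from $\Fix_a$ the involution $a$ acts freely, so Lemma~\ref{lem:tho} splits it as $V\sqcup aV$ and the constant value $m$ is realised as $(1+a)(m\cdot 1_V)$; on each level set meeting $\Fix_a$ the value is even and is realised by the constant function equal to half of it. Assembling these pieces shows $g\in(1+a)C(X,\Z)$, i.e. $[g]=0$, so $\ker\bar R_a$ is torsion-free.

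Finally I would combine this with Theorem~\ref{thm:hom}(iii): since $H_1\simeq C(\Fix_a\sqcup\Fix_b,\Z_2)$ is $2$-torsion, any $c\in\ker\delta_1$ satisfies $2c\in\Ima\delta_2$, so the image of $c$ in $Q$ is a torsion element; if moreover $R(c)=0$, that image lies in the torsion-free group $\ker R$ and hence is zero, giving $c\in\Ima\delta_2$. Thus $R$ is injective, and the theorem follows. I expect the main obstacle to be the second step, namely the decomposition of $Q$ — in particular verifying that $\Psi_a$ is well-defined, which requires rerunning the computation of Lemma~\ref{wd} modulo $(1+a)C(X,\Z)$ — together with the halving argument of the third step, where the possible non-openness of $\Fix_a$ is precisely what forces the use of Lemma~\ref{lem:tho}.
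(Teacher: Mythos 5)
Your proof is correct, and it takes a genuinely different route from the paper's. The paper never leaves $\ker\delta_1$: using \eqref{mais} it reduces $[f]$ to $[f_1+f_2]$ with supports over the letters $a$ and $b$, then makes its key dynamical move --- showing $\delta_1(f_1)=\delta_1(f_2)=0$ separately, because $h:=\delta_1(f_1)$ satisfies $h\circ a=h\circ b=-h$, hence is $ba$-invariant, hence constant by minimality of $ba$ (Remark~\ref{remark}) and zero at a fixed point --- after which each summand reduces via \eqref{rela} to $[1_{\{a\}\times A}]$ with $A$ invariant, and Lemma~\ref{lem:tho} kills it once $R=0$ forces $A\cap\Fix_a=\emptyset$. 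You instead compute the full quotient $Q=C_c((\Z_2*\Z_2)\ltimes X,\Z)/\Ima\delta_2\simeq C(X,\Z)/(1+a)C(X,\Z)\oplus C(X,\Z)/(1+b)C(X,\Z)$, and the two steps you flag as delicate do go through: the well-definedness of $\Psi_a$ is Lemma~\ref{wd}'s computation rerun with signs over $\Z$, where after the reindexing $j=|x|-i+1$ one has $(x^{-1})_{(i,|x|]}x=x_jx_{(j,|x|]}$, so the reindexed sum acquires a leading letter $x_j=a$, i.e.\ a factor $-1$ modulo $(1+a)C(X,\Z)$, and the two sums cancel exactly; and the level-set splitting/halving argument (Lemma~\ref{lem:tho} on levels missing $\Fix_a$, halving on levels meeting it) correctly shows every torsion element of $\ker\bar R_a$ vanishes. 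The one structural difference worth noting is your use of Theorem~\ref{thm:hom}(iii) to get that $H_1$ has exponent $2$. This is legitimate and non-circular --- Theorem~\ref{thm:hom} is quoted from \cite{OrSc22}, and the paper offers Theorem~\ref{thm:form} only as an explicit description of the abstract isomorphism \eqref{nonfree} --- but it makes your argument depend on the external computation at exactly the point where the paper's proof re-derives it: the paper's argument is self-contained and yields \eqref{nonfree} anew. The dependence is easily removable inside your own framework: under your identification, $\delta_1$ descends to $\bar\delta_1([g],[h])=(1-a)g+(1-b)h$ and $H_1=\ker\bar\delta_1\subset Q$; if $c:=(1-a)g=-(1-b)h$, then $ac=bc=-c$, so $c$ is $ba$-invariant, hence constant by minimality and zero at a point of $\Fix_a\cup\Fix_b\neq\emptyset$, giving $ag=g$, $bh=h$ and $2([g],[h])=([(1+a)g],[(1+b)h])=0$ --- the paper's minimality trick transplanted into your setting. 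In exchange for being longer, your approach buys more: an explicit presentation of the whole quotient $Q$ with retractions $\Psi_a,\Psi_b$ (an integral refinement of Lemma~\ref{wd}), the identification of $R$ with reduction mod $2$ on fixed sets, and a clean isolation of where dynamics enters (only in the $2$-torsion claim); the paper's proof buys brevity and independence from the $H_1$ computation of \cite{OrSc22}.
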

\begin{proof}
The fact that $R$ is well-defined follows from Lemma \ref{wd}.

\emph{Surjectivity:} Given a clopen subspace $U'$ of $\Fix_a$, there exists an open set $U\subset X$ such that $U'=U\cap\Fix_a$.  By the compactness of $U'$, we can cover it by finitely many clopen sets in $X$ each of which is contained in $U$, so $U$ can be taken as the union of these clopen sets and assumed to be clopen in $X$ as well.  Since $a^2=\Id_X$, by considering $U\cap a(U)$ we may also assume that $U=a(U)$. Then $R([1_{\{a\}\times U}])=1_{U\cap\Fix_a}=1_{U'}$.  Surjectivity of $R$ follows by arguing in the same way for $b$.

\emph{Injectivity:} Fix $f\in\ker\delta_1$ such that $R([f])=0$, and we will show that $[f]=0$. Given a clopen set $A\subset X$ and $g,h\in\Z_2*\Z_2$, we have by \eqref{image} that 
\begin{equation}\label{rela}
[1_{\{h\}\times gA}-1_{\{hg\}\times A}+1_{\{g\}\times A}]=0.
\end{equation}
 In particular, $[1_{\{e\}\times A}]=0$.  Furthermore, 
\begin{equation}\label{mais}
1_{\{hg\}\times A}+\Ima\delta_2=1_{\{h\}\times gA}+1_{\{g\}\times A}+\Ima\delta_2.
\end{equation}

Notice that $C_c((\Z_2*\Z_2)\rtimes X,\Z)$ is spanned by $$\{1_{\{w\}\times A}:w\in\Z_2*\Z_2,\text{$A\subset X$ clopen}\}$$ and any $w\in \Z_2*\Z_2$ can be written as a product of the elements $a$ and $b$.  Therefore, by using \eqref{mais} repeatedly, we obtain $f_1,f_2\in C_c((\Z_2*\Z_2)\rtimes X,\Z)$ such that $\supp f_1\subset\{a\}\times X$, $\supp f_2\subset\{b\}\times X$ and $f+\Ima\delta_2=f_1+f_2+\Ima\delta_2$. Since $f\in\ker\delta_1\supset\Ima\delta_2$, we have that $\delta_1(f_1+ f_2)=0$, hence $[f]=[f_1+f_2].$ 

We claim that $\delta_1(f_1)=\delta_1(f_2)=0$. Indeed, we can write $f_1=\sum_{n\in\Z}n1_{\{a\}\times A_n}$ and $f_2=\sum_{n\in\Z}n1_{\{b\}\times B_n}$ for families of disjoint clopen sets $(A_n)_{n\in\Z},(B_n)_{n\in\Z}$.  Since $f_1$ and $f_2$ are compactly supported, only finitely many of the sets $A_n$ and $B_n$ are nonempty.  We have
\begin{align}
0=\delta_1(f_1+f_2) 
&\implies \delta_1(f_1)=-\delta_1(f_2)\nonumber\\
&\implies \sum n(1_{A_n}-1_{aA_n})=\sum n(1_{bB_n}-1_{B_n}).\label{aga}
\end{align}

Let $h:=\sum n(1_{A_n}-1_{aA_n})$.  Since $a^2=b^2=\Id_X$, we have that $h\circ a=-h$ and it is a consequence of \eqref{aga} that $h\circ b=-h$, hence $h\circ (ba)=h$. Since the action is not free,  it follows from Remark \ref{remark} that $ba$ is a minimal homeomorphism. Therefore,  $h$ is a constant function.  Moreover,  by Remark \ref{remark} either $a$ or $b$ has a fixed point $x$. In particular, $h(x)=0$ and $h=0$. Hence, $\delta_1(f_1)=\delta_1(f_2)=0$. 

Let us now show that $[f_1]=0$.  Since $\delta_1(f_1)=0$, we have that $\sum n1_{aA_n}=\sum n1_{A_n}$. Hence $a(A_n)=A_n$ for every $n\in \Z\setminus\{0\}$.  By applying \eqref{rela}, we conclude that for any clopen set $D\subset X$ such that $a(D)=D$,  it holds that $[2(1_{\{a\}\times D})]=0$. Hence, there exists a clopen set $A\subset X$ such that $a(A)=A$ and $[f_1]=[1_{\{a\}\times A}]$.  Since $R([f])=0$,  it follows that $0=R([f_1])=1_{A\cap\Fix_a}$, hence $A\cap\Fix_a=\emptyset$.  By using Lemma \ref{lem:tho},  we obtain a clopen set $A'\subset X$ set such that $A=A'\sqcup a(A')$.

Finally, $[f_1]=[1_{\{a\}\times a(A')}+1_{\{a\}\times A'}]=0$ by \eqref{rela} again. 

By arguing in the same way for $f_2$, we conclude that $[f]=0$.

\end{proof}

\subsection{Topological full groups}
Let $\alpha$ be an action of a group $\G$ on the Cantor set X.  We say that a bisection $U\subset\G\ltimes X$ is \emph{full} if $r(U)=s(U)=X$.  Take $g_1,\dots,g_n\in\G$ and disjoint clopen sets $A_1,\dots,A_n\subset X$ such that $X=\bigsqcup A_i=\bigsqcup g_i A_i$ and $U=\bigsqcup\{g_i\}\times A_i$.  We can associate to $U$ a homeomorphism $\theta_U$ on $X$ given by $\theta_U(x)=g_i x$, for $x\in A_i$.  Given full bisections $U$ and $V$, it holds that $\theta_{UV}=\theta_U\theta_V$ and $\theta_{U^{-1}}=(\theta_U)^{-1}$. The \emph{topological full group} of $\alpha$ is $[[\alpha]]:=\{\theta_U:\text{$U$ is a full bisection}\}$.

Notice that $[[\alpha]]$ coincides with the set of homeomorphisms $h\colon X\to X$ which are locally given by $\alpha$, in the sense that there are $g_1,\dots,g_n\in\G$ and clopen sets $A_1,\dots,A_n\subset X$ such that $X=\bigsqcup A_i=\bigsqcup g_i A_i$ and $h|_{A_i}=\alpha_{g_i}|_{A_i}$ for $1\leq i \leq n$.

From now on,  suppose $\G$ is countable and $\alpha$ is minimal and topologically free.  Topological freeness implies that $\theta$ is injective (in particular, $[[\alpha]]$ coincides with the group of full bisections, which is usually taken as the definition of the topological full group of the transformation groupoid of $\alpha$).  The \emph{index map} $I\colon[[\alpha]]_{\mathrm{ab}}\to H_1(\G\ltimes X)$ is given by $I([\theta_U]):=[1_U]$ (\cite[Definition 7.1]{Mat12}).

Given $g\in\G$ and a clopen set $A\subset X$ such that $A\cap gA=\emptyset$, let $\tau_{g,A}\in[[\alpha]]$ be the map given by

\begin{align}\label{tau}
\tau_{g,A}(x):=
\begin{cases}
gx& \text{if $x\in A$}\\
g^{-1}x & \text{if $x\in gA$}\\
x & \text{otherwise}. 
\end{cases}
\end{align}

Since the clopen sets $A, gA, (A\sqcup gA)^c$ partition $X$, it follows that $\tau_{g,A}$ is a homeomorphism. 

Let $\sS(\alpha)$ be the subgroup of $[[\alpha]]$ generated by such elements.  By \cite[Theorem 7.2]{Nek19}, there exists a unique map $j\colon H_0(\G\ltimes X)\otimes\Z_2\to[[\alpha]]_{ab}$ which, given $g\in \G$ and $A\subset X$ as above, maps $[1_A]\otimes 1$ to $[\tau_{g,A}]$ (the fact that $j$ does not depend on the choice of $g$ is part of the content of \cite[Theorem 7.2]{Nek19}).  Notice that 
\begin{equation}\label{eq:sa}
\Ima j=\{[h]\in[[\alpha]]_{ab}:h\in \sS(\alpha)\}.
\end{equation}

It follows from the results in \cite{Nek19} that $\Ima j\subset \ker I$,  but it is not difficult to check this directly by using \eqref{rela}.  Matui's AH conjecture (\cite[Conjecture 2.9]{Mat16}) predicts that the following sequence is exact:

 $$H_0(\G\ltimes X)\otimes\Z_2\stackrel{j}\longrightarrow [[\alpha]]_{\mathrm{ab}}\stackrel{I}\longrightarrow H_1(\G\ltimes X)\longrightarrow 0.$$

Therefore,  the AH conjecture consists of two problems: determining whether $I$ is surjective and whether $\ker I \subset \Ima j$.

\section{The AH Conjecture for Cantor minimal dihedral systems}

In this section, we show that minimal actions of $\Z\rtimes\Z_2$ on the Cantor set satisfy the AH conjecture.  We will use the following result from \cite{BEK93} which provides \emph{Kakutani–Rokhlin partitions} for such systems.  All the intervals consist of integers, so that, for example, given an integer $K>0$,  $[1,K]=\{1,\dots,K\}$.

\begin{proposition}[{\cite{BEK93}}]\label{partition}
Let $(\varphi,\sigma)$ be a minimal action of $\Z\rtimes\Z_2$ on the Cantor set $X$ such that $\varphi\sigma$ admits a fixed point. Given a partition $\cP$ of $X$ into clopen sets and $N\in\N$, there exist clopen sets $Y_1,\dots,Y_K\subset X$ and integers $J_1,\dots,J_K$ such that the following hold.
\begin{enumerate}
\item[(i)] For $k\in[1,K]$, $J_k\geq 2N$.
\item[(ii)] The sets $\{\varphi^i(Y_k):k\in[1,K],i\in[0,J_k)\}$ are disjoint with union $X$. Moreover, this partition refines $\cP$.
\item[(iii)] Let $Y:=\bigsqcup_{k=1}^KY_k$.  Then $\sigma(Y)=\varphi^{-1}(Y)=\bigsqcup_{k=1}^K\varphi^{J_k-1}(Y_k)$ and, for $i\in[-N,N)$,  there exists $P_i\in\cP$ such that $\varphi^i(Y)\subset P_i$.

\item[(iv)] There exists an involutive bijection $\tau\colon[1,K]\to[1,K]$ such that, for $l\in\Z$ and $k\in[1,K]$, we have $J_{k}=J_{\tau(k)}$ and $\sigma\varphi^l(Y_k)=\varphi^{J_k-l-1}(Y_{\tau(k)})$.
\end{enumerate}
\end{proposition}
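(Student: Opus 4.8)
The plan is to build an ordinary Kakutani–Rokhlin partition for the minimal homeomorphism $\varphi$ (which is minimal by Remark \ref{remark}, since the existence of a fixed point of $\varphi\sigma$ forces the action to be non-free) whose base is invariant under the involution $\rho:=\varphi\sigma$, and then to observe that such a partition is \emph{automatically} $\sigma$-symmetric up to a harmless refinement. The two algebraic identities driving everything are $\rho^2=\Id_X$ and $\rho\varphi^n\rho=\varphi^{-n}$ (equivalently $\sigma=\varphi^{-1}\rho$ and $\sigma\varphi^n=\varphi^{-n}\sigma$), together with the existence, by hypothesis, of a point $x_0\in\Fix_{\varphi\sigma}=\Fix_\rho$.

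First I would fix the base. Let $Q$ be the cell of the clopen partition $\bigvee_{i=-N}^{N-1}\varphi^{-i}\cP$ containing $x_0$; by construction $\varphi^i(Q)$ lies in a single member of $\cP$ for each $i\in[-N,N)$. Since $\varphi$ is minimal, hence aperiodic, the point $x_0$ admits arbitrarily small clopen neighbourhoods $V$ with $V\cap\varphi^j(V)=\emptyset$ for $1\le j<2N$, and any base contained in such a $V$ has all first-return times $\ge 2N$. I then set $Y:=(Q\cap V)\cap\rho(Q\cap V)$. This is a clopen neighbourhood of $x_0=\rho(x_0)$ that is $\rho$-invariant and sits inside both $Q$ and $V$, so it already secures the level-alignment and height requirements; moreover $\rho(Y)=Y$ gives $\sigma(Y)=\varphi^{-1}(Y)$, the first half of (iii).

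Next I would run the first-return construction over $Y$: writing $J_k\ge 2N$ for the (finitely many, locally constant) return values and $Y_k$ for the corresponding cells, the towers $\{\varphi^i(Y_k):0\le i<J_k\}$ partition $X$ with top $\varphi^{-1}(Y)=\bigsqcup_k\varphi^{J_k-1}(Y_k)$, giving (i) and the remainder of (iii). The heart of the argument is the symmetry computation. For $y\in Y$ with return time $J$, the identity $\rho\varphi^n\rho=\varphi^{-n}$ together with $\rho(Y)=Y$ shows that $w:=\varphi^{-J}\rho(y)$ again lies in $Y$ with forward return time exactly $J$; since $\sigma\varphi^l(y)=\varphi^{-l-1}\rho(y)=\varphi^{J-l-1}(w)$, the map $\sigma$ carries the $\varphi$-tower through $y$ onto the tower through $w$ while reversing levels. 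Thus $\Theta(C):=\varphi^{-J_C}\rho(C)$ is a height-preserving involution on first-return cells, and $\sigma$ acts on towers exactly as $\Theta$ prescribes.

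It remains to impose conditions (ii) and (iv) simultaneously. I would first refine each $Y_k$ by $\bigvee_{i=0}^{J_k-1}\varphi^{-i}\cP$ so that every tower piece lies in a single member of $\cP$ (condition (ii)), and then replace the resulting base partition $\cR$ by $\cR\vee\Theta(\cR)$. Because $\Theta$ is an involution this operation is idempotent, so $\cR\vee\Theta(\cR)$ is $\Theta$-invariant while remaining $\cP$-aligned and height-$\ge 2N$; the involution induced by $\Theta$ on its cells is the desired $\tau$, and the level-reversal identity above yields $\sigma\varphi^l(Y_k)=\varphi^{J_k-l-1}(Y_{\tau(k)})$ with $J_{\tau(k)}=J_k$ (the case $l=0$ suffices, the rest following from $\sigma\varphi^l=\varphi^{-l}\sigma$), which is (iv). The only genuinely delicate point is the symmetry computation identifying the action of $\sigma$ on towers with the combinatorial involution $\Theta$; once that is in hand, the choice of base and the two compatible refinements are routine bookkeeping, the idempotence of common-refinement-with-$\Theta$-image being exactly what lets (ii) and (iv) be arranged at once.
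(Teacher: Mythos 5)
Your proof is correct, but it takes a genuinely different route from the paper's: the paper does not construct the Kakutani--Rokhlin partition at all. Items (i)--(iii), together with the existence of $\tau$ satisfying $\sigma(Y_k)=\varphi^{J_{\tau(k)}-1}(Y_{\tau(k)})$ and $J_k=J_{\tau(k)}$, are quoted directly from \cite{BEK93} (Proposition 1.2, Equation 1.13 and Remark 1.3), and the only thing actually proved there is the extension of (iv) from $l=0$ to arbitrary $l$ via $\sigma\varphi^l=\varphi^{-l}\sigma$, a step your proposal performs in identical form. What you supply beyond the paper is a self-contained proof of the cited input: the observation that the first half of (iii) amounts to invariance of the base under the involution $\rho:=\varphi\sigma$, the choice $Y=(Q\cap V)\cap\rho(Q\cap V)$ around the hypothesized fixed point of $\rho$, and the key computation that for $y\in Y$ with return time $J$ the point $w:=\varphi^{-J}\rho(y)$ lies in $Y$ with return time exactly $J$ (since $\rho\varphi^j\rho=\varphi^{-j}$ and $\rho(Y)=Y$ force the backward return time of $\rho(y)$ to equal $J$), so that $\Theta(C):=\varphi^{-J_C}\rho(C)$ is a return-time-preserving involution with $\sigma\varphi^l(y)=\varphi^{J-l-1}(\Theta(y))$; this is in substance a reproof of \cite{BEK93}*{Remark 1.3}. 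Your bookkeeping checks out: refining the base by $\bigvee_{i=0}^{J_k-1}\varphi^{-i}\cP$ and then passing to $\cR\vee\Theta(\cR)$ leaves $Y$, the heights and the $\cP$-alignment intact (so (i)--(iii) survive, since refining a base only shrinks levels), and $\Theta^2=\Id$ gives $\Theta(\cR\vee\Theta(\cR))=\Theta(\cR)\vee\cR=\cR\vee\Theta(\cR)$, which is exactly the invariance needed to read off $\tau$ in (iv). The trade-off is clear: the paper's proof is two lines but opaque without the reference, while yours is longer but makes the dihedral symmetry mechanism explicit and rederives, rather than assumes, the $\sigma$-symmetric partition of \cite{BEK93}.
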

\begin{proof}

Items (i), (ii) and (iii) are the content of \cite[Proposition 1.2 and Equation 1.13]{BEK93}.  Let us prove (iv). By \cite[Remark 1.3]{BEK93},  given $k\in[1,K]$, there exists $\tau(k)\in[1,K]$ such that $\sigma(Y_k)=\varphi^{J_{\tau(k)}-1}(Y_{\tau(k)})$. Furthermore, $J_k=J_{\tau(k)}$ and  $\sigma(Y_{\tau(k)})=\varphi^{J_{k}-1}(Y_{k})$, hence $\tau$ is an involution.  

Recall that $\sigma\varphi\sigma^{-1}=\varphi^{-1}$.  Given $l\in \Z$, we have 

$$\sigma\varphi^l(Y_k)=\varphi^{-l}\sigma(Y_k)=\varphi^{J_{k}-l-1}(Y_{\tau(k)}).$$
Analogously,
$$\sigma\varphi^l(Y_{\tau(k)})=\varphi^{-l}\sigma(Y_{\tau(k)})=\varphi^{J_{\tau(k)}-l-1}(Y_k).$$
\end{proof}

Given a Kakutani–Rokhlin partition as in Proposition \ref{partition} and $k\in[1,K]$,  the set $\{\varphi^i(Y_k):i\in[0,J_k)\}$ is called a \emph{tower} of the partition and, for $i\in[0,J_k)$,  each set $\varphi^i(Y_k)$ is called a \emph{level} of the tower.

Given  a minimal action $(\varphi,\sigma)$ of $\Z\rtimes\Z_2$ on the Cantor set $X$,  denote by $[[(\varphi,\sigma)]]$ the associated topological full group.  Let $[[\varphi]]$ be the topological full group associated to the $\Z$-action induced by $\varphi$.  Also let $\sS(\varphi,\sigma)$ and $\sS(\varphi)$ be the groups generated by elements as in \eqref{tau}, for the actions of $\Z\rtimes\Z_2$ and $\Z$, respectively.  

Given $h\in[[(\varphi,\sigma)]]$, there are $(r_1,s_1),\dots,(r_n,s_n)\in\Z\rtimes\Z_2$ and clopen sets $A_1,\dots,A_n\subset X$ such that $X=\bigsqcup A_i=\bigsqcup \varphi^{r_i}\sigma^{s_i}(A_i)$ and $h|_{A_i}=\varphi^{r_i}\sigma^{s_i}|_{A_i}$ for $1\leq i \leq n$. Notice that $h\in[[\varphi]]$ if and only if $s_i=0$ for $1\leq i\leq n$.

Given a clopen set $A\subset X$ and $h\in[[(\varphi,\sigma)]]$ such that $h(A)=A$,  let $h_A\in [[(\varphi,\sigma)]]$ be given by $h_A(x):=h(x)$, for $x\in A$, and $h_A(x):=x$ otherwise.

\begin{lemma}\label{lem:fund}
Let $n\in\Z$ and $U\subset X$ be a clopen set such that $\varphi^n\sigma(U)=U$. If $U\cap\Fix_{\varphi^n\sigma}=\emptyset$, then $(\varphi^n\sigma)_U\in\sS(\varphi,\sigma)$.
\end{lemma}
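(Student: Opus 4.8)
The plan is to exhibit $(\varphi^n\sigma)_U$ as a single generator of the form \eqref{tau}. The crucial observation is that the group element $\varphi^n\sigma$ is an involution: since $\sigma\varphi^n\sigma^{-1}=\varphi^{-n}$, we have $(\varphi^n\sigma)^2=\varphi^n\sigma\varphi^n\sigma=\varphi^n\varphi^{-n}\sigma^2=\Id_X$. In particular $(\varphi^n\sigma)^{-1}=\varphi^n\sigma$, so a generator $\tau_{\varphi^n\sigma,V}$ as in \eqref{tau} acts as $\varphi^n\sigma$ on \emph{both} $V$ and $\varphi^n\sigma(V)$, and as the identity off $V\sqcup\varphi^n\sigma(V)$. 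This is exactly the behaviour we want from $(\varphi^n\sigma)_U$, provided we can split $U$ appropriately.

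If $U=\emptyset$ the claim is trivial, so assume $U\neq\emptyset$; since $X$ is the Cantor set and hence has no isolated points, $U$ is then infinite. The hypotheses say that $\varphi^n\sigma$ restricts to a homeomorphism of the totally disconnected compact Hausdorff space $U$ satisfying $(\varphi^n\sigma)^2|_U=\Id_U$ and $\Fix_{\varphi^n\sigma}\cap U=\emptyset$. I would therefore apply Lemma \ref{lem:tho} to $U$ together with the involution $\varphi^n\sigma|_U$ to obtain a clopen set $V\subset U$ with $U=V\sqcup\varphi^n\sigma(V)$.

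It then remains to verify the identity $(\varphi^n\sigma)_U=\tau_{\varphi^n\sigma,V}$. First, $V\cap\varphi^n\sigma(V)=\emptyset$, so the right-hand side is a well-defined generator of $\sS(\varphi,\sigma)$. Both maps act as $\varphi^n\sigma$ on $V$; on $\varphi^n\sigma(V)$ the right-hand side acts as $(\varphi^n\sigma)^{-1}=\varphi^n\sigma$, which agrees with $(\varphi^n\sigma)_U$ since $\varphi^n\sigma(V)\subset U$; and both maps restrict to the identity on $X\setminus U=X\setminus(V\sqcup\varphi^n\sigma(V))$. Hence $(\varphi^n\sigma)_U=\tau_{\varphi^n\sigma,V}\in\sS(\varphi,\sigma)$.

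This argument is essentially a direct application of Lemma \ref{lem:tho}, so I do not anticipate a serious obstacle. The only point genuinely requiring care is the involution identity $(\varphi^n\sigma)^2=\Id_X$: it is precisely this fact that allows a single generator $\tau_{\varphi^n\sigma,V}$ to reproduce $(\varphi^n\sigma)_U$ on all of $V\sqcup\varphi^n\sigma(V)$, rather than forcing us to build it as a product. One should also keep in mind the degenerate case $U=\emptyset$, handled separately above.
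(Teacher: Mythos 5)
Your proposal is correct and follows exactly the paper's argument: apply Lemma \ref{lem:tho} to $U$ with the involution $\varphi^n\sigma|_U$ to get a clopen $V$ with $U=V\sqcup\varphi^n\sigma(V)$, and then identify $(\varphi^n\sigma)_U$ with the single generator $\tau_{\varphi^n\sigma,V}$. The extra details you supply (the verification that $\varphi^n\sigma$ is an involution, the case $U=\emptyset$, and the pointwise check of the identity) are all sound and merely make explicit what the paper leaves implicit.
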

\begin{proof}
By Lemma \ref{lem:tho},  there exists a clopen set $V\subset U$ such that $U=V\sqcup \varphi^n\sigma(V)$. Then $(\varphi^n\sigma)_U=\tau_{\varphi^n\sigma,V}$, as in \eqref{tau}.
\end{proof}

Let $(\varphi,\sigma)$ be a non-free minimal action of $\Z\rtimes\Z_2$ on the Cantor set $X$. The next result says that, given $Q\in[[(\varphi,\sigma)]]$,  by mutiplying $Q$ by some well-behaved involutions, we can assume that the result lies in $[[\varphi]]$. The idea behind its proof is a technique introduced in \cite[Theorem 4.7]{GM14} for factorizing an element of the topological full group of a Cantor minimal $\Z$-system as a product of elements adapted to a Kakutani–Rokhlin partition of the system.

The strategy of the proof is as follows: given $Q\in[[(\varphi,\sigma)]]$,  we will produce a Kakutani–Rokhlin partition $\{\varphi^i(Y_k):k\in[1,K],i\in[0,J_k)\}$ of $X$ such that $Q|_{\varphi^i(Y_k)}=\varphi^{f_1(k,i)}\sigma^{f_2(k,i)}|_{\varphi^i(Y_k)}$, for a family 
$$\{(f_1(k,i),f_2(k,i))\}_{\substack{k\in[1,K]\\ i\in [0,J_k)}}\subset\Z\rtimes\Z_2.$$
 If $f_2(k,i)$ were equal to $0$ for each $(k,i)$, we would have that $Q\in [[\varphi]]$.  We will find disjoint sets $A_1,\dots,A_l$ consisting of unions of the levels $\varphi^i(Y_k)$ on which $f_2(k,i)=1$, and integers $n_1,\dots,n_l$ such that $\varphi^{n_s}\sigma(A_s)=A_s$ for each $1\leq s\leq l$. By multiplying $Q$ by $\prod_{s=1}^l(\varphi^{n_s}\sigma)_{A_s}$, the result will lie in $[[\varphi]]$.  In order to find the sets $A_1,\dots,A_l$, we will have to deal separately with lower and upper levels of towers on one hand, and middle levels of towers on the other, which explains the appearance of the sets $\La_1$ and $\La_2$ in the proof.

\begin{theorem}\label{tec}
Let $(\varphi,\sigma)$ be a non-free minimal action of $\Z\rtimes\Z_2$ on the Cantor set $X$. Given $Q\in[[(\varphi,\sigma)]]$, there exist $n_1,\dots,n_l\in\Z$ and disjoint clopen sets $U_1,\dots,U_l\subset X$ such that $\varphi^{n_i}\sigma(U_i)=U_i$ for $1\leq i\leq l$, and $Q\prod_{i=1}^l(\varphi^{n_i}\sigma)_{U_i}\in[[\varphi]]$.
\end{theorem}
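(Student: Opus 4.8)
The plan is to reduce the statement to a combinatorial pairing of levels in a Kakutani–Rokhlin partition, exploiting the fact that in $\Z\rtimes\Z_2$ the reflections interchange $\tau$-paired towers while the rotations preserve towers.

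First I would reduce to the case in which $\varphi\sigma$ admits a fixed point, so that Proposition \ref{partition} applies. By Remark \ref{remark} at least one of $\Fix_\sigma,\Fix_{\varphi\sigma}$ is nonempty; if only $\Fix_\sigma\neq\emptyset$, I replace the presentation $(\varphi,\sigma)$ by $(\varphi^{-1},\varphi\sigma)$. This is another action of $\Z\rtimes\Z_2$ defining the same group of homeomorphisms (hence the same $[[(\varphi,\sigma)]]$ and, since $[[\varphi^{-1}]]=[[\varphi]]$, the same target group $[[\varphi]]$); now the relevant reflection is $\varphi^{-1}\cdot\varphi\sigma=\sigma$, and any element $(\varphi')^n\sigma'=\varphi^{1-n}\sigma$ stays of the form $\varphi^m\sigma$, so the conclusion transfers. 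Writing $Q$ via finitely many pieces $Q|_{A_j}=\varphi^{r_j}\sigma^{s_j}$, I apply Proposition \ref{partition} to the partition $\{A_j\}$ with $N>\max_j|r_j|$, obtaining towers $\{\varphi^i(Y_k)\}$, heights $J_k\geq 2N$, and the involution $\tau$. Refining gives $Q|_{\varphi^i(Y_k)}=\varphi^{f_1(k,i)}\sigma^{f_2(k,i)}$, and — this is the point of taking $N$ large — item (iii) forces the pair $(f_1,f_2)$ on each of the bottom $N$ and the top $N$ levels to be independent of the tower $k$.

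The goal then reduces to partitioning the set of $\sigma$-levels (those with $f_2(k,i)=1$) into disjoint unions of levels $U_1,\dots,U_l$, each satisfying $\varphi^{n_s}\sigma(U_s)=U_s$ for some $n_s\in\Z$. Indeed, with $P:=\prod_s(\varphi^{n_s}\sigma)_{U_s}$ one checks $QP\in[[\varphi]]$: off $\bigsqcup_s U_s$ one has $f_2=0$ so $QP=Q$ is locally a power of $\varphi$, while on $U_s$ a point is sent by $\varphi^{n_s}\sigma$ to another $\sigma$-level, where the $\sigma$ of $Q$ meets the $\sigma$ of $P$ and cancels via $\sigma\varphi^{n_s}\sigma=\varphi^{-n_s}$, leaving $\varphi^{f_1'-n_s}$ (this works even where $\varphi^{n_s}\sigma$ has fixed points, so fixed-point-freeness is not needed here). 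For the pairing I use that $\varphi^n\sigma$ sends $\varphi^i(Y_k)$ to $\varphi^{n+J_k-1-i}(Y_{\tau(k)})$, so reflections carry tower $k$ into tower $\tau(k)$ while rotations keep each tower fixed, provided the target index stays in $[0,J_k)$ — which by the choice of $N$ is automatic for the middle levels $i\in[N,J_k-N)$. Hence $\varphi^i(Y_k)\sqcup\varphi^{i'}(Y_{\tau(k)})$ is invariant under $\varphi^n\sigma$ with $n=i+i'+1-J_k$, and a single middle level of a self-paired tower ($\tau(k)=k$) is invariant under $\varphi^{2i+1-J_k}\sigma$. The crucial input is that $Q$ is a bijection: counting how the levels of tower $\tau(k)$ get filled — only $\sigma$-levels of tower $k$ (via reflections) and non-$\sigma$-levels of tower $\tau(k)$ (via rotations) map into it — forces the number of $\sigma$-levels of tower $k$ to equal that of tower $\tau(k)$. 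This balance lets me match the middle $\sigma$-levels within each $\tau$-orbit into pairs, disposing of any leftover in a self-paired tower by a self-reflection; these matchings form the family indexed by $\La_2$.

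The main obstacle is the boundary: the counting argument is clean only where no spilling occurs, but the $Q$-images of the top and bottom $N$ levels can cross between towers, so their $\sigma$-level counts need not balance $\tau$-orbit by $\tau$-orbit. This is exactly what the uniformity from item (iii) is for — on the boundary layers the $\sigma$-pattern is identical in every tower, and $\sigma$ matches the $i$-th bottom level of tower $k$ with the $(i+1)$-th level from the top of tower $\tau(k)$ uniformly in $k$ — so I would group these boundary $\sigma$-levels globally, across all towers simultaneously, into $\varphi^n\sigma$-invariant unions of levels, giving the family indexed by $\La_1$. Carrying out this boundary bookkeeping so that it meshes with the middle matching and uses every $\sigma$-level exactly once is where I expect the real difficulty to lie.
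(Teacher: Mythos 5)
Your architecture is the paper's, almost piece for piece: the same reduction to the case where $\varphi\sigma$ has a fixed point via the swap $(\varphi^{-1},\varphi\sigma)$; the same Kakutani--Rokhlin partition from Proposition \ref{partition} refining the pieces of $Q$, with $N$ dominating the exponents and $J_k\geq 2N$; the same two families of invariant sets, namely the boundary layers (the paper's $\La_1$: each layer $\varphi^{\la}(Y)$ with $\la\in[-N,N)$ and $f_2=1$ satisfies $\varphi^{2\la+1}\sigma(\varphi^{\la}(Y))=\varphi^{\la}(Y)$ on its own) and the middle pairs $U_{k,i}=\varphi^i(Y_k)\cup\varphi^{\alpha_k(i)}(Y_{\tau(k)})$ built from an order-preserving bijection $\alpha_k\colon S_k\to S_{\tau(k)}$ over a transversal $\La_2$ of the $\tau$-orbits; and the same local computation $\varphi^{f_1}\sigma\varphi^{n}\sigma=\varphi^{f_1-n}$ showing the product lands in $[[\varphi]]$. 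One of your two worries is in fact vacuous: no ``meshing'' of the boundary bookkeeping with the middle matching is required, because each boundary layer $\varphi^\la(Y)$ is invariant under its own reflection wholesale, disjointly from all the middle pairs, and by Proposition \ref{partition}(iii) the function $f_2$ is constant on each such layer.

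The genuine gap is the other point you flagged: the equality $|S_k|=|S_{\tau(k)}|$, without which the matching $\alpha_k$ does not exist. Your filling count --- ``only $\sigma$-levels of tower $k$ (via reflections) and non-$\sigma$-levels of tower $\tau(k)$ (via rotations) map into tower $\tau(k)$'' --- is false as stated: boundary levels of \emph{every} tower can land in tower $\tau(k)$, including in its middle range. For instance, a bottom level $i\in[0,N)$ of tower $\tau(k)$ with $f_2=0$ and $f_1+i\geq N$ rotates into the middle of its own tower (possible whenever $J_{\tau(k)}>2N$), while a bottom level of an arbitrary tower $l$ with $f_1(l,i)+i<0$ maps into $\varphi^{f_1+i}(Y)$, which spreads across the top layers of \emph{all} towers; reflections from boundary $\sigma$-levels spill similarly. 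So the count does not close $\tau$-orbit by $\tau$-orbit, and uniformity of the boundary pattern, if used only to build the $\La_1$ sets, does not repair it. The paper's resolution is an asymmetric injectivity/surjectivity argument on indices mod $J_k$: one sets $T_k:=\{f_1(\tau(k),n)+J_k-n-1:n\in S_{\tau(k)}\}\subset[0,J_k)$, which has cardinality $|S_{\tau(k)}|$ because $Q$ is injective and the middle hypothesis $n\in[N,J_k-N)$ keeps these positions inside tower $k$; one then defines $F\colon\Z_{J_k}\setminus S_k\to\Z_{J_k}$ by $F(t)=f_1(k,t)+t$ if $f_2(k,t)=0$ and $F(t)=f_1(k,t)-t-1$ if $f_2(k,t)=1$, and proves $\Ima F\supset\Z_{J_k}\setminus T_k$ by a case analysis over the source tower ($l=k$, $l=\tau(k)$, $l\notin\{k,\tau(k)\}$), in which the boundary uniformity $f(l,i)=f(k,i)$ for $i\in[0,N)$ and $f(l,J_l-i-1)=f(k,J_k-i-1)$ is precisely what lets boundary levels of foreign towers be funneled into $\Z_{J_k}$ (bottom levels at index $i$, top levels at index $J_k-J_l+i$, all read mod $J_k$). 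This gives $|S_k|\leq|T_k|=|S_{\tau(k)}|$, and since $k$ is arbitrary, equality. Until you supply this (or an equivalent) counting argument, the proposal is a correct skeleton of the paper's proof with its central lemma unproven.
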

\begin{proof}
We claim that we can assume that $\varphi\sigma$ has a fixed point. If it does not, then $\sigma$ does and we can consider the  action of $\Z\rtimes\Z_2$ given by $(\psi,\mu):=(\varphi^{-1},\varphi\sigma)$. For $r\in\Z$, we have $\psi^r\mu=\varphi^{-r+1}\sigma$ and, in particular, $\psi\mu=\sigma$ has a fixed point. Moreover,  since 
$$\{\varphi^n\sigma^i:n\in\Z,i\in\{0,1\}\}=\{\psi^m\mu^j:m\in\Z,j\in\{0,1\}\},$$
we have $[[(\varphi,\sigma)]]=[[(\psi,\mu)]]$. Hence we can assume that $\varphi\sigma$ has a fixed point.

Let $A_1,\dots,A_n$ be disjoint clopen sets with union $X$ and let $$(m_1,p_1),\dots,(m_n,p_n)\in\Z\rtimes\Z_2$$ such that, for $1\leq i \leq n$ and $x\in A_i$, we have  $Q(x)=\varphi^{m_i}\sigma^{p_i}(x)$.  Denote by $\cP$ the partition determined by the sets $A_i$. Also let $N:=\max\{|m_i|:1\leq i\leq n\}$.  If $N=0$, let 
$$U:=\bigsqcup_{\{i:p_i=1\}}A_i.$$ Since $Q|_{U^c}=\Id_{U^c},$ $Q|_U=\sigma|_U$ and $Q$ is bijective,  we have that $\sigma(U)=U$, $Q=\sigma_U$, and the result follows.  So let us assume that $N>0$.

Adopt the notation of Proposition \ref{partition}.  Given $k\in[1,K]$ and $i\in[0,J_k)$, let $f(k,i):=(f_1(k,i),f_2(k,i))\in\Z\rtimes\Z_2$ be given by $f(k,i):=(m_j,p_j)$ if $\varphi^i(Y_k)\subset A_j$.  Notice that, for any $k,l\in[1,K]$ and $i\in[0,N)$,  by Proposition \ref{partition}.(iii) we have $f(k,i)=f(l,i)$ and $f(k,J_k-i-1)=f(l,J_l-i-1)$.

Given $k\in[1,K]$, let $S_k:=\{i\in[N,J_k-N):f_2(k,i)=1\}$. We will show that $|S_k|=|S_{\tau(k)}|$.  Obviously, we may assume $k\neq\tau(k)$. 

Let $T_k:=\{f_1(\tau(k),n)+J_k-n-1:n\in S_{\tau(k)}\}$.  Given $n\in S_{\tau(k)}$,  by Proposition \ref{partition}(iv) we have $Q(\varphi^n(Y_{\tau(k)}))=\varphi^{f_1(\tau(k),n)+J_k-n-1}(Y_k)$. Furthermore, since $N\leq n<J_k-N$ and $-N\leq f_1(\tau(k),n)\leq N$, we have $$0\leq f_1(\tau(k),n)+J_k-n-1<J_k.$$ 
Since $Q$ is bijective, we obtain that, given $n,m\in S_{\tau(k)}$ such that $n\neq m$, it holds that $f_1(\tau(k),n)+J_k-n-1\neq f_1(\tau(k),m)+J_k-m-1$. Therefore, $|T_k|=|S_{\tau(k)}|$. 

Identify $[0,J_k)$ with the group $\Z_{J_k}$ and let $F\colon\Z_{J_k}\setminus S_k\to\Z_{J_k}$ be the function given by

\begin{align*}
F(t):=
\begin{cases}
f_1(k,t)+t& \text{if $f_2(k,t)=0$}\\
f_1(k,t)-t-1 & \text{if $f_2(k,t)=1.$}
\end{cases}
\end{align*} 

We will show that $\Ima F\supset \Z_{J_k}\setminus T_k$.  Given $m\in\Z_{J_k}\setminus T_k,$ let $x\in X$ such that $Q(x)\in\varphi^m(Y_k)$.  Take $l\in[1,K]$ and $i\in[0,J_l)$ such that $x\in\varphi^i(Y_l)$.

Case $l=k$: If $f_2(k,i)=1$, then $i\notin[N,J_k-N)$. Therefore,  $i\notin S_k$ and $F(i)=m$. If $f_2(k,i)=0$, then $i\notin S_k$ and $F(i)=m$.

Case $l=\tau(k)$: Since $m\notin T_k$, we have $i\notin[N,J_k-N)$, hence $f(k,i)=f(\tau(k),i)$ and $F(i)=m$.

Case $l\notin\{k,\tau(k)\}$: Notice that $i\notin[N,J_l-N)$. Suppose first $i\in[0,N)$. If $f_2(l,i)=0$, then $f_1(l,i)+i<0$ and $f_1(l,i)+i=m\mod J_k$, hence $F(i)=m$. If $f_2(l,i)=1$, then $f_1(l,i)+J_l-i-1\geq J_l$ and $f_1(l,i)-i-1=m$. In particular, $F(i)=m$. 

Finally, assume $i\in[J_l-N,J_l)$.  In particular,  $J_k-J_l+i\in[J_k-N,J_k)$ and $f(l,i)=f(k,J_k-J_l+i)$. If $f_2(l,i)=0$, we have $f_1(l,i)+i-J_l=m$ and $F(J_k-J_l+i)=f_1(k,J_k-J_l+i)+J_k-J_l+i=f_1(l,i)+J_k+i-J_l=m \mod J_k.$ If $f_2(l,i)=1$,  we have $f_1(l,i)+J_l-i-1=m\mod J_k$, hence $F(J_k-J_l+i)=f_1(k,J_k-J_l+i)-J_k+J_l-i-1=m\mod J_k$.

This finishes the proof that $\Ima F\supset \Z_{J_k}\setminus T_k$. Hence,  $|S_k|\leq|T_k|=|S_{\tau(k)}|$. Since $k$ was arbitrary, we get $|S_k|=|S_{\tau(k)}|$ for any $k\in [1,K]$.

Let $\La_1:=\{i\in[-N,N):f_2(1,i)=1\}$. Given $\la\in\La_1$,  by Proposition \ref{partition}.(iii), we have
$$\varphi^{2\la+1}\sigma(\varphi^\la(Y))=\varphi^{2\la+1}\varphi^{-\la-1}(Y)=\varphi^{\la}(Y).$$

Take $\La_2\subset[1,K]$ minimal such that $[1,K]=\La_2\cup\tau(\La_2)$.  For each $k\in\La_2$, let $\alpha_k\colon S_k\to S_{\tau(k)}$ be the order-preserving bijection, and, for $i\in S_k$,  let $n_{k,i}:=-J_k+i+\alpha_k(i)+1$ and $U_{k,i}:=\varphi^i(Y_k)\cup\varphi^{\alpha_k(i)}(Y_{\tau(k)})$ (so that, if $\tau(k)=k$, then $n_{k,i}=-J_k+2i+1$ and $U_{k,i}=\varphi^i(Y_k)$).  Notice that, for each $k\in\La_2$ and $i\in S_k$, it holds that $\varphi^{n_{k,i}}\sigma(U_{k,i})=U_{k,i}$. 

Given $\la\in\La_1\cap[0,N)$, we have that $\varphi^\lambda(Y)$ consists of levels whose distance to the bottom of their towers is $\lambda$. Analogously, given $\la\in\La_1\cap[-N,-1]$,  we have that $\varphi^\lambda(Y)$ consists of levels whose distance to the top of their towers is $|\la|-1$.  In particular, the sets $\varphi^\la(Y)$ are disjoint from each other and from the sets $U_{k,i}$.

Finally, $$Q\prod_{\la\in\La_1}(\varphi^{2\la+1}\sigma)_{\varphi^\la(Y)}\prod_{\substack{k\in\La_2\\i\in S_k}}(\varphi^{n_{k,i}}\sigma)_{U_{k,i}}\in[[\varphi]].$$

\end{proof}

\begin{corollary}\label{cor:fac}
Let $(\varphi,\sigma)$ be a non-free minimal action of $\Z\rtimes\Z_2$ on the Cantor set $X$. Given $Q\in[[(\varphi,\sigma)]]$, there exist $n_1,\dots,n_l\in\Z$ and  clopen sets $V_1,\dots,V_l\subset X$ such that $\varphi^{n_i}\sigma(V_i)=V_i$ for $1\leq i\leq l$, and $Q\prod_{i=1}^l(\varphi^{n_i}\sigma)_{V_i}\in\sS(\varphi)$.
\end{corollary}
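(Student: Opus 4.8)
The plan is to use Theorem \ref{tec} to push $Q$ into $[[\varphi]]$, and then to absorb the remaining $[[\varphi]]$-part into $\sS(\varphi)$ at the cost of a controlled power of $\varphi$, which in turn is expressible through global dihedral reflections. Concretely, Theorem \ref{tec} furnishes disjoint clopen sets $U_1,\dots,U_m$ and integers $n_1,\dots,n_m$ with $\varphi^{n_i}\sigma(U_i)=U_i$ and
$$P:=Q\prod_{i=1}^m(\varphi^{n_i}\sigma)_{U_i}\in[[\varphi]].$$
It therefore suffices to produce finitely many reflections of the prescribed form whose product, multiplied onto $P$ on the right, lands in $\sS(\varphi)$; concatenating these with the $(\varphi^{n_i}\sigma)_{U_i}$ and relabelling then yields the sets $V_1,\dots,V_l$, which need no longer be disjoint.

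The key elementary observation is that powers of $\varphi$ are themselves products of reflections of the required form. Taking the invariant set equal to $X$, the maps $(\varphi^0\sigma)_X=\sigma$ and $(\varphi^1\sigma)_X=\varphi\sigma$ are admissible (since $\varphi^n\sigma(X)=X$), and from $\sigma\varphi\sigma=\varphi^{-1}$ one reads off $\sigma\cdot(\varphi\sigma)=\varphi^{-1}$ and $(\varphi\sigma)\cdot\sigma=\varphi$. Hence, for every $d\in\Z$, the homeomorphism $\varphi^{d}$ is a product of $2|d|$ reflections drawn from $\{\sigma,\varphi\sigma\}$, each of the form $(\varphi^{n}\sigma)_X$ with $X$ invariant.

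The heart of the argument is to show that $P$ agrees with a power of $\varphi$ modulo $\sS(\varphi)$. Since the action is non-free, Remark \ref{remark} guarantees that $\varphi$ is minimal, so $H_1(\Z\ltimes X)\cong\Z$ and the index map $I\colon[[\varphi]]_{\mathrm{ab}}\to H_1(\Z\ltimes X)$ is defined; set $d:=I(P)$, viewed as an integer. As $\varphi$ represents a generator of $\Z\cong H_1(\Z\ltimes X)$, the element $P\varphi^{-d}$ has trivial index, and I claim that $P\varphi^{-d}\in\sS(\varphi)$ — equivalently, that $\ker I=\sS(\varphi)$ for a Cantor minimal $\Z$-system. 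This is the step I expect to be the main obstacle, and it is exactly where the Grigorchuk–Medynets factorization (on which Theorem \ref{tec} is modelled) enters at the level of the $\Z$-system: passing to a sufficiently fine Kakutani–Rokhlin partition, one arranges that $P\varphi^{-d}$ permutes the levels of the towers, the within-tower moves being transpositions of levels of the shape $\tau_{\varphi^{j-i},\varphi^i(Y_k)}\in\sS(\varphi)$ and the cyclic tower-rotations being products of such transpositions; the only feature that can escape $\sS(\varphi)$ is the net number of times the top of a tower is carried across to the bottom of another, which is precisely measured by the index and has been cancelled by $\varphi^{-d}$. One must verify that the tower-rotations indeed have vanishing index (a short homology computation using relations of the type \eqref{rela}) and that the resulting index-free level permutation is a product of transpositions.

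Finally I would assemble the pieces. Writing $\varphi^{-d}$ as a product of reflections $(\varphi^{n}\sigma)_X$ as in the second paragraph and appending them to the $(\varphi^{n_i}\sigma)_{U_i}$, one obtains clopen sets $V_1,\dots,V_l\in\{U_1,\dots,U_m,X\}$ together with integers satisfying $\varphi^{n_j}\sigma(V_j)=V_j$ such that
$$Q\prod_{j=1}^l(\varphi^{n_j}\sigma)_{V_j}=P\varphi^{-d}\in\sS(\varphi),$$
as required. The appearance of $X$ among the $V_j$ is exactly what forces us to relinquish the disjointness present in Theorem \ref{tec}.
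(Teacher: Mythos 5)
Your proposal is correct and takes essentially the same route as the paper: apply Theorem \ref{tec} to land in $[[\varphi]]$, kill the index with a power of $\varphi$, and rewrite that power as a product of the global reflections $\sigma_X$ and $(\varphi\sigma)_X$ via $\varphi=(\varphi\sigma)\sigma$, at the cost of giving up disjointness of the $V_i$. The one step you single out as the main obstacle --- that $T\varphi^{-I([T])}\in\sS(\varphi)$ for a Cantor minimal $\Z$-system --- is true but is not re-proved in the paper; it is obtained by citing \cite[Lemma 6.3]{Mat12} and \cite[Theorem 3.3.(4)]{Mat16} together with the identification $H_1(\Z\ltimes X)\simeq\Z$ with $I([\varphi])=1$ from \cite[Proposition 3.3.1]{Nyl20}, so your Kakutani--Rokhlin sketch of that fact, while in the right spirit, can simply be replaced by these references.
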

\begin{proof}
Recall that we can identify $H_1(\Z\ltimes X)$ with $\Z$ in such a way that the index map $I\colon [[\varphi]]_{ab}\to\Z$ satisfies $I([\varphi])=1$ (see, e.g.,  \cite[Proposition 3.3.1]{Nyl20}). In particular, by \cite[Lemma 6.3]{Mat12} and \cite[Theorem 3.3.(4)]{Mat16}, given $T\in[[\varphi]]$,  we have that $T\varphi^{-I([T])}\in\sS(\varphi)$. Furthermore,  since $\sigma^2=\Id_X$, we have 
\begin{equation}\label{eq:ind}
\varphi^{-I([T])}=((\varphi\sigma)\sigma)^{-I([T])}=\left((\varphi\sigma)_X\sigma_X\right)^{-I([T])}.
\end{equation}

Given $Q\in[[(\varphi,\sigma)]]$,  by Theorem \ref{tec} there are $n_1,\dots,n_l\in\Z$ and clopen sets $U_1,\dots,U_l\subset X$ such that $T:=Q\prod_{i=1}^l(\varphi^{n_i}\sigma)_{U_i}\in[[\varphi]]$. Hence
\begin{equation}\label{eq:qp}
Q\left(\prod_{i=1}^l(\varphi^{n_i}\sigma)_{U_i}\right)\varphi^{-I([T])}\in\sS(\varphi).
\end{equation}
By \eqref{eq:ind} and \eqref{eq:qp}, the result follows.
\end{proof}

\begin{theorem}\label{thm:main}
Minimal actions of $\Z\rtimes\Z_2$ on the Cantor set satisfy the AH conjecture.
\end{theorem}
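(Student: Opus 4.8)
My plan is to dispatch the free case by citation and then concentrate all effort on the non-free case, for which the preceding lemmas are tailored. By Remark~\ref{remark}, a free action has principal transformation groupoid, and by \cite{OrSc22} such a groupoid is almost finite; Matui's verification of the AH conjecture for principal almost finite groupoids in \cite{Mat16} therefore covers this case, exactly as recalled in the introduction. So I would now fix a non-free minimal action $(\varphi,\sigma)$, write $a=\sigma$ and $b=\varphi\sigma$, and identify $H_1((\Z\rtimes\Z_2)\ltimes X)$ with $C(\Fix_\sigma\sqcup\Fix_{\varphi\sigma},\Z_2)$ through the isomorphism $R$ of Theorem~\ref{thm:form}. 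Since $\Ima j\subseteq\ker I$ is already known, two things remain: that $I$ is surjective and that $\ker I\subseteq\Ima j$.

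For surjectivity I would test $R\circ I$ on the involutions $\sigma_U$ and $(\varphi\sigma)_U$, where $U$ is clopen with $\sigma(U)=U$, resp. $\varphi\sigma(U)=U$. The full bisection of $\sigma_U$ is $(\{\sigma\}\times U)\sqcup(\{e\}\times U^c)$, and since $[1_{\{e\}\times U^c}]=0$ one computes $R(I([\sigma_U]))=R([1_{\{\sigma\}\times U}])=1_{U\cap\Fix_\sigma}$, and likewise $R(I([(\varphi\sigma)_U]))=1_{U\cap\Fix_{\varphi\sigma}}$. As in the surjectivity part of Theorem~\ref{thm:form}, every clopen subset of $\Fix_\sigma$ (resp. of $\Fix_{\varphi\sigma}$) arises as $U\cap\Fix_\sigma$ for a clopen $\sigma$-invariant $U$; as these indicators generate $C(\Fix_\sigma\sqcup\Fix_{\varphi\sigma},\Z_2)$ and $R$ is an isomorphism, $I$ is surjective.

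The decisive and, I expect, hardest step is $\ker I\subseteq\Ima j$. Given $Q\in[[(\varphi,\sigma)]]$ with $I([Q])=0$, Corollary~\ref{cor:fac} supplies $n_1,\dots,n_l\in\Z$ and clopen $V_1,\dots,V_l$ with $\varphi^{n_i}\sigma(V_i)=V_i$ and $Q\prod_i(\varphi^{n_i}\sigma)_{V_i}=T'\in\sS(\varphi)\subseteq\sS(\varphi,\sigma)$, whence in the abelianization
$$[Q]+\sum_{i=1}^l[(\varphi^{n_i}\sigma)_{V_i}]=[T']\in\Ima j.$$
It thus suffices to show $\sum_i[(\varphi^{n_i}\sigma)_{V_i}]\in\Ima j$. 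Here I would use the conjugacy relations $\varphi^{2k}\sigma=\varphi^k\sigma\varphi^{-k}$ and $\varphi^{2k+1}\sigma=\varphi^k(\varphi\sigma)\varphi^{-k}$: because conjugation is trivial in $[[(\varphi,\sigma)]]_{\mathrm{ab}}$ and $\varphi^k\in[[\varphi]]$, each $[(\varphi^{n_i}\sigma)_{V_i}]$ equals $[\sigma_{V_i'}]$ or $[(\varphi\sigma)_{V_i'}]$ for the translate $V_i':=\varphi^{-k}V_i$, which is invariant under $\sigma$, resp. $\varphi\sigma$. Using $\sigma_{W}\sigma_{W'}=\sigma_{W\triangle W'}$ to combine like terms, the sum becomes $[\sigma_{V_a}]+[(\varphi\sigma)_{V_b}]$ with $\sigma(V_a)=V_a$ and $\varphi\sigma(V_b)=V_b$.

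To finish I would invoke the disjointness of $\Fix_\sigma$ and $\Fix_{\varphi\sigma}$ from Remark~\ref{remark}. Applying $R\circ I$ to the displayed relation and using $R(I([Q]))=R(I([T']))=0$ gives
$$1_{V_a\cap\Fix_\sigma}+1_{V_b\cap\Fix_{\varphi\sigma}}=0 \quad\text{in}\quad C(\Fix_\sigma\sqcup\Fix_{\varphi\sigma},\Z_2),$$
and since the two summands are supported on disjoint sets, both vanish, i.e. $V_a\cap\Fix_\sigma=\emptyset$ and $V_b\cap\Fix_{\varphi\sigma}=\emptyset$. Lemma~\ref{lem:fund} then gives $\sigma_{V_a},(\varphi\sigma)_{V_b}\in\sS(\varphi,\sigma)$, so by \eqref{eq:sa} their classes lie in $\Ima j$; hence $[Q]=[T']-[\sigma_{V_a}]-[(\varphi\sigma)_{V_b}]\in\Ima j$. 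The only genuinely delicate points I anticipate are bookkeeping the translation reduction and confirming that $R\circ I$ really computes the fixed-point-intersection indicator on these two families of involutions.
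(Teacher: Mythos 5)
Your proposal is correct and follows essentially the same route as the paper's proof: the free case via principality, almost finiteness and Matui's theorem; surjectivity of $I$ via the isomorphism $R$ of Theorem \ref{thm:form}; and $\ker I\subseteq\Ima j$ via Corollary \ref{cor:fac}, the conjugation identity $\varphi^{-n}(\varphi^m\sigma)_A\varphi^n=(\varphi^{-2n+m}\sigma)_{\varphi^{-n}(A)}$ reducing to $\sigma$- and $\varphi\sigma$-type involutions, vanishing of $R\circ I$ on the disjoint components $\Fix_\sigma$ and $\Fix_{\varphi\sigma}$, and finally Lemma \ref{lem:fund} with \eqref{eq:sa}. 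Your only real deviation is a neat shortcut at the end: merging all like terms into a single involution via $\sigma_W\sigma_{W'}=\sigma_{W\triangle W'}$ (valid because $W,W'$ are $\sigma$-invariant, though you should record this small verification), which lets you bypass the paper's disjointification of the sets $A_i$ into disjoint $W_j$ and the $2$-torsion cancellation argument it uses to reach the same conclusion.
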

\begin{proof}
By \cite[Theorem 2.10]{OrSc22} and \cite[Theorem 3.6]{Mat16},  if the action is free, then it satisfies the AH conjecture.  Furthermore, surjectivity of the index map in general follows from \cite[Theorem 7.5]{Mat12} (alternatively,  this can be easily checked by using Theorem \ref{thm:form}). 

Let $(\varphi,\sigma)$ be a non-free minimal action of $\Z\rtimes\Z_2$ on the Cantor set $X$. Given $Q\in[[(\varphi,\sigma)]]$,  denote the class of $Q$ in $\frac{[[(\varphi,\sigma)]]_{ab}}{\Ima j}$ by $[Q]$. Since $\Ima j\subset \ker I$, we have that the index map factors through $\frac{[[(\varphi,\sigma)]]_{ab}}{\Ima j}$.  Assume that $I([Q])=0$. We want to show that $\ker I\subset \Ima j$,  and will do this by proving that $[Q]$ is the zero element of $\frac{[[(\varphi,\sigma)]]_{ab}}{\Ima j}$.  

It follows from \eqref{eq:sa} and Corollary \ref{cor:fac} that there exist clopen sets $V_1,\dots,V_l$ and $m_1,\dots,m_l\in\Z$ such that $\varphi^{m_i}\sigma(V_i)=V_i$ for $1\leq i\leq l$ and $[Q]=\sum_{i=1}^l[(\varphi^{m_i}\sigma)_{V_i}]$.  

Given a clopen set $A\subset X$ and $n,m\in\Z$ such that $\varphi^m\sigma(A)=A$, notice that $\varphi^{-n}(\varphi^m\sigma)_A\varphi^n=(\varphi^{-2n+m}\sigma)_{\varphi^{-n}(A)}$.  

For $1\leq i\leq l$,  let $n_i$ be the only integer such that $-2n_i+m_i\in\{0,1\}$. If $-2n_i+m_i=0$, let $A_i:=\varphi^{-n_i}(V_i)$ and $B_i:=\emptyset$. If $-2n_i+m_i=1$, let $B_i:=\varphi^{-n_i}(V_i)$ and $A_i:=\emptyset$. Then $A_1,\dots,A_l,B_1,\dots,B_l$ are clopen sets such that $\sigma(A_i)=A_i$ and $\varphi\sigma(B_i)=B_i$ for $1\leq i\leq l$, and $[Q]=\sum_{i=1}^l[(\sigma)_{A_i}]+[(\varphi\sigma)_{B_i}]$. 

Let $C_a:=\sum_{i=1}^l[(\sigma)_{A_i}]$, $C_b:=\sum_{i=1}^l[(\varphi\sigma)_{B_i}]$ and $R\colon H_1((\Z_2*\Z_2)\ltimes X)\to C(\Fix_a\sqcup\Fix_b,\Z_2)$ be the map from Theorem \ref{thm:form} (we identify $a$ with $\sigma$ and $\varphi\sigma$ with $b$). We have 

$$0=R(I([Q]))=R(I(C_a))+R(I(C_b))=\sum_{i=1}^l (1_{A_i\cap\Fix_\sigma}+1_{B_i\cap\Fix_{\varphi\sigma}}).$$
Since $\Fix_\sigma$ and $\Fix_{\varphi\sigma}$ are disjoint and $R$ is an isomorphism,  we conclude that $0=I(C_a)=I(C_b)$.

By taking intersections and symmetric differences of the sets $A_1,\dots, A_l$, we obtain disjoint clopen sets $W_1,\dots,W_n\subset X$ such that $\sigma(W_j)=W_j$ for $1\leq j\leq n$ and such that, for $1\leq i\leq l$, there is $F_i\subset\{1,\dots,l\}$ such that $A_i=\bigsqcup_{j\in F_i}W_j$.  In particular, 
\begin{equation}\label{eq:ca}
C_a=\sum_{i=1}^n\sum_{j\in F_i}[(\sigma)_{W_j}].
\end{equation}
Since each $(\sigma)_{W_j}$ is involutive, we have that $2[(\sigma)_{W_j}]=0$.  By cancelling the terms which appear an even number of times in the sum in \eqref{eq:ca}, we obtain $F\subset\{1,\dots,l\}$ such that $C_a=\sum_{j\in F}[(\sigma)_{W_j}]$.  By Theorem \ref{thm:form}, we have $0=R(I(C_a))=\sum_{j\in F}1_{W_j\cap\Fix_\sigma}$,  hence $W_j\cap\Fix_\sigma=\emptyset$ for each $j\in F$. By using Lemma \ref{lem:fund} and \eqref{eq:sa}, we conclude that $C_a=0$.  

By arguing in an analogous way for $\sum_{i=1}^k[(\varphi\sigma)_{B_i}]$, we conclude that $C_b=0$ and $[Q]=0$.

\end{proof}
In \cite[Section 5]{GPS99}, it was shown that topological full groups of Cantor minimal $\Z$-systems are \emph{indicable} (that is,  they admit a homomorphism onto $\Z$). For Cantor minimal dihedral systems, we have the following:
\begin{corollary}\label{lin}
Let $(\varphi,\sigma)$ be a minimal action of $\Z\rtimes\Z_2$ on the Cantor set. Then $[[(\varphi,\sigma)]]$ is indicable if and only if $\varphi$ is not minimal.
\end{corollary}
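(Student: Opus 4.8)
The plan is to deduce everything from the AH conjecture, now available as Theorem \ref{thm:main}, combined with the homology computation of Theorem \ref{thm:hom}. Recall that a group is indicable precisely when it surjects onto $\Z$, and any such surjection factors through the abelianization. Thus $[[(\varphi,\sigma)]]$ is indicable if and only if $[[(\varphi,\sigma)]]_{\mathrm{ab}}$ admits a surjection onto $\Z$. Writing $G:=(\Z\rtimes\Z_2)\ltimes X$, Theorem \ref{thm:main} supplies the exact sequence
$$H_0(G)\otimes\Z_2\stackrel{j}\longrightarrow [[(\varphi,\sigma)]]_{\mathrm{ab}}\stackrel{I}\longrightarrow H_1(G)\longrightarrow 0.$$
The key structural observation is that $\Ima j$ is a quotient of the $\Z_2$-vector space $H_0(G)\otimes\Z_2$, hence is itself a $\Z_2$-vector space, in particular torsion; and by exactness $[[(\varphi,\sigma)]]_{\mathrm{ab}}/\Ima j\simeq H_1(G)$.

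For the forward implication, I would argue as follows. If $\varphi$ is not minimal, then Theorem \ref{thm:hom}(i) gives $H_1(G)\simeq\Z$, so the index map $I\colon[[(\varphi,\sigma)]]_{\mathrm{ab}}\to\Z$ is itself the required surjection onto $\Z$, and the group is indicable.

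For the converse, suppose $\varphi$ is minimal. By Theorem \ref{thm:hom}(ii) and (iii), $H_1(G)$ is either $0$ (when the action is free) or isomorphic to $C(\Fix_\sigma\sqcup\Fix_{\varphi\sigma},\Z_2)$ (when it is not free); in either case $H_1(G)$ is a torsion group. Since $\Ima j$ is torsion and the quotient $[[(\varphi,\sigma)]]_{\mathrm{ab}}/\Ima j\simeq H_1(G)$ is torsion, the abelianization is an extension of a torsion group by a torsion group and is therefore torsion. As a torsion abelian group admits no surjection onto $\Z$, the group $[[(\varphi,\sigma)]]$ fails to be indicable.

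I do not expect a genuine obstacle here, since the argument is a direct application of the two cited theorems. The only points requiring (routine) care are the verification that an extension of a torsion group by a torsion group is again torsion, a standard order-of-element argument, and the identification of $\Ima j$ as a $\Z_2$-vector space, which holds because quotients of $\Z_2$-vector spaces are $\Z_2$-vector spaces.
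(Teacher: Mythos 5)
Your proof is correct and follows essentially the same route as the paper: in the non-minimal case you use surjectivity of the index map together with $H_1\simeq\Z$ from Theorem \ref{thm:hom}(i), and in the minimal case you observe via the exact sequence of Theorem \ref{thm:main} that $[[(\varphi,\sigma)]]_{\mathrm{ab}}$ is an extension of the torsion group $H_1$ by the torsion group $\Ima j$, hence torsion and not indicable. Your writeup merely spells out details (such as $\Ima j$ being a $\Z_2$-vector space quotient) that the paper leaves implicit.
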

\begin{proof}
If $\varphi$ is not minimal, then it follows from surjectivity of the index map and Theorem \ref{thm:hom} that $[[(\varphi,\sigma)]]$ is indicable.

If $\varphi$ is minimal, then it follows from Theorems \ref{thm:hom} and \ref{thm:main} that $[[(\varphi,\sigma)]]_{ab}$ is an extension of torsion groups, hence is a torsion group as well.  In particular,  $[[(\varphi,\sigma)]]$ is not indicable.
\end{proof}
\bibliography{bibliografia}
\end{document}